\documentclass[reqno]{amsart}

\usepackage{etoolbox}
\patchcmd{\section}{\scshape}{\bfseries}{}{}
\patchcmd{\subsection}{-.5em}{.5em}{}{}
\makeatletter
\renewcommand{\@secnumfont}{\bfseries}
\makeatother

\usepackage{amsfonts}
\usepackage{amssymb}
\usepackage{enumerate} 
\usepackage[lite]{amsrefs}

\usepackage{amsthm}
\usepackage{amsmath}
\usepackage{hyperref}
\usepackage[left=37 mm, right=37 mm]{geometry}

\title{Formalizing Elements of Probabilistic Mechanics}
\date{\today}

\keywords{Newton's Laws of Motion, Particle Trajectory on Lattice, Random Walk, Kolmogorov Extension Theorem}

\subjclass[2020]{82C03 Foundations of time-dependent statistical mechanics; 82M60  	Stochastic analysis in statistical mechanics}
\author{Farida Kachapova}
\address{Auckland University of Technology
\\Auckland, New Zealand}
\email{farida.kachapova@aut.ac.nz} 
\author{Ilias Kachapov}
\address{University of Auckland\\ Auckland, New Zealand}
\email{kachapov@xtra.co.nz} 
\newpage

\newtheorem{theorem}{Theorem}[section]

\newtheorem{lemma}[theorem]{Lemma}
\newtheorem{proposition}[theorem]{Proposition}
\newtheorem{cor}[theorem]{Corollary}
\newtheorem{definition}[theorem]{Definition}
\newtheorem{notation}[theorem]{Notation}

\begin{document}
%\nocite{*}

\maketitle

\section*{Abstract}
In this paper we create a model of particle motion on a three-dimensional lattice using discrete random walk with small steps. We rigorously construct a probability space of the particle trajectories. Unlike deterministic approach in classical mechanics, here we use probabilistic properties of particle movement to formally derive analogues of Newton's first  and second laws of motion. Similar probabilistic models can potentially be applied to justify laws of thermodynamics in a consistent manner. 

\section{Introduction}
This paper is motivated by some unsolved problems in  thermodynamics. Heat transfer is regarded  as the transfer of kinetic energy from one particle to another when they collide. This process is time-reversible: if we change the direction of time (or the direction of velocity), then kinetic energy is transferred from the second particle to the first one. Thus, heat transfer is considered a deterministic process (when initial conditions uniquely determine the future and past positions, velocity and other characteristics), therefore it follows laws of classical mechanics. 

However, this is inconsistent with the fact that heat transfer at the macroscopic level is not time-reversible: heat can be transferred from a hot solid to a cold one but not back. Another example: when  hot and cold liquids are mixed the result is a warm liquid, but the warm liquid cannot be separated into the hot and cold liquids. 

This inconsistency might be resolved if we assume that molecule movement follows probabilistic rather than deterministic laws because probabilistic processes are irreversible. At the same time such a theory need to be consistent with Newton's laws of motion and the rest of the classical mechanics at the macroscopic level. Research in this direction is being conducted such as the study of relations between microscopic and global properties of physical objects in statistical mechanics  (see for example, \cite{S} and \cite{DS}). 

In this paper we contribute in this direction of research by creating a model where a particle moves randomly in small steps while its long distance movement becomes approximately deterministic.
We use a rigorous approach in construction of this model. 
In Section 2 we introduce a probability space $(\Omega,\mathcal{F},P)$ of particle trajectories on a lattice with small cells. 
Probability measure for this space is defined by applying Hahn–Kolmogorov extension theorem (see \cite{N}, \cite{D}, \cite{R}). 

In Section 3 we introduce a stochastic process - a random walk $\mathbf{R}_n$ on the lattice using the probability space $(\Omega,\mathcal{F},P)$, and derive properties of $\mathbf{R}_n$. $\mathbf{R}_n$ represents the position of the particle at time $n\tau$, where $\tau$ is a fixed small unit of time. Velocity and acceleration of the particle are defined in terms of the random walk.

In Section 4 we state an axiom about probabilities of particle motion under zero resultant force and derive an analogue of Newton's first law of motion. 

In Section 5 we state an axiom about probabilities of particle motion under a non-zero resultant force and derive an analogue of Newton's second law of motion, plus a couple of related facts. 

\section{Probability space} 

In this section we construct a probability space $(\Omega,\mathcal{F},P)$.
We will use the following concepts from measure theory.

\begin{definition}
Suppose $\Omega$ is a non-empty set and 
$\mathcal{A}$ is a family of subsets of $\Omega$. 

$\mathcal{A}$ is called an \textbf{algebra over} $\Omega$ if it has the properties:
\begin{enumerate}
\item $\varnothing\in\mathcal{A}$,
\smallskip
\item $B\in\mathcal{A}\;\Rightarrow\; B^c=\Omega\setminus B\in\mathcal{A}$,
\smallskip
\item $B,C\in\mathcal{A}\;\Rightarrow\; B\cup C\in\mathcal{A}$.
\end{enumerate}
\end{definition}

\begin{definition}
Suppose $\Omega$ is a non-empty set and 
$\mathcal{F}$ is a family of subsets of $\Omega$. 

$\mathcal{F}$ is called a $\boldsymbol{\sigma}$\textbf{-algebra on} $\Omega$ if it has the properties:
\begin{enumerate}
\item $\varnothing\in\mathcal{F}$,
\smallskip
\item $B\in\mathcal{F}\;\Rightarrow\; B^c=\Omega\setminus B\in\mathcal{F}$,
\smallskip
\item for any elements $B_1,B_2,B_3,\ldots$ of $\mathcal{F}$: 
$\quad\bigcup\limits_{n=1}^\infty B_n\in\mathcal{F}$.
\end{enumerate}
\end{definition}

\subsection{Sample space and sigma-algebra of events}

\begin{definition}
1) Denote $\mathbb{N}_7=\{0,1,2,3,4,5,6\}$.
For the \textbf{sample space} we choose the set $\Omega =(\mathbb{N}_7)^\mathbb{N}$. 

Thus, any element $\omega$ of $\Omega$ is an infinite sequence $\omega=
(\omega_0,\omega_1,\omega_2, \ldots)$ of natural numbers between 0 and 6.
 Here $\omega_k$ denotes the $k$-th element of the sequence $\omega$ $(k=0,1,2,\ldots).$

Such a sequence represents \textbf{trajectory of a particle} on 3-dimensional lattice (details are given in Section 3). 

2) A \textbf{hyperplane} is defined as a set of the form:
\[\boldsymbol{C(n,i)}=\{\omega\in\Omega: w_n=i\},\]
where $n\in\mathbb{N}$ and $i\in\mathbb{N}_7$.
\end{definition}

For each $n\in\mathbb{N}$ there are seven distinct hyperplanes: $C(n,0),C(n,1),C(n,2),C(n,3)$, $C(n,4),C(n,5),C(n,6)$.

\begin{proposition}
1) If $i\neq j$, then $C(n,i)\cap C(n,j)=\varnothing$.
\medskip

2) For any $n\in\mathbb{N}$:
\[\bigcup_{j=0}^6 C(n,j)=\Omega.\]

3) Complement of any hyperplane $C(n,i)$ is a union of the other six hyperplanes:
$$\big(C(n,i)\big)^c=\bigcup_{\substack{j=0\\j\neq i}}^6 C(n,j).$$
\label{prop:hyperplane}
\end{proposition}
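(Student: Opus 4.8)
The plan is to argue all three parts directly from the definition of the hyperplane $C(n,i)=\{\omega\in\Omega:\omega_n=i\}$ together with the single structural fact that every coordinate $\omega_n$ of a point $\omega\in\Omega$ takes exactly one value in the finite set $\mathbb{N}_7=\{0,1,\ldots,6\}$. Each assertion reduces to an elementary observation about this one coordinate, so the reasoning is purely set-theoretic and no measure theory is needed at this stage.

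For part (1) I would take an arbitrary $\omega$ in the intersection $C(n,i)\cap C(n,j)$ and note that membership forces $\omega_n=i$ and $\omega_n=j$ simultaneously. Since a sequence has a well-defined $n$-th entry, this gives $i=j$, contradicting the hypothesis $i\neq j$; hence no such $\omega$ exists and the intersection is empty. For part (2), the inclusion $\bigcup_{j=0}^6 C(n,j)\subseteq\Omega$ is immediate because each $C(n,j)$ is by definition a subset of $\Omega$. For the reverse inclusion I would take any $\omega\in\Omega$: its entry $\omega_n$ lies in $\mathbb{N}_7$, so $\omega_n=j$ for some $j\in\{0,\ldots,6\}$, which places $\omega$ in that $C(n,j)$ and hence in the union.

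Part (3) I would deduce from the first two, which is the cleanest route. By (2) the seven hyperplanes cover $\Omega$, and by (1) they are pairwise disjoint, so together they partition $\Omega$; removing the single block $C(n,i)$ must leave exactly the union of the remaining six. Concretely, $\omega\in\bigl(C(n,i)\bigr)^c$ means $\omega_n\neq i$, and since $\omega_n$ still belongs to $\mathbb{N}_7$ it equals some $j\neq i$, placing $\omega$ in $\bigcup_{j\neq i}C(n,j)$; the converse inclusion follows from the disjointness in (1).

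There is no genuine obstacle here, as the statement is elementary. The only point requiring care is keeping both the finiteness of the index set and the \emph{exactly one value} property of $\omega_n$ explicit throughout, since it is precisely this single fact that makes the covering in (2) exhaustive and the complement computation in (3) valid.
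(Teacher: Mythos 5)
Your proof is correct; the paper simply declares this proposition obvious, and your elementary coordinate-by-coordinate verification is exactly the intended argument. Nothing further is needed.
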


Proof of the proposition is obvious.

\begin{definition}
1) Denote $\boldsymbol{\mathcal{F}_0}$ the algebra generated by all hyperplanes $C(n,i)$, where $n\in\mathbb{N}$ and $i\in\mathbb{N}_7$. This means that $\mathcal{F}_0$ is the least algebra over $\Omega$ containing all hyperplanes.
\medskip

2) Denote $\boldsymbol{\mathcal{F}}$ the $\sigma$-algebra generated by $\mathcal{F}_0$, i.e. the least $\sigma$-algebra on $\Omega$ containing $\mathcal{F}_0$. Elements of $\mathcal{F}$ are called \textbf{events}.
\end{definition}

Thus, we have introduced $\Omega$ and $\mathcal{F}$. To complete the definition of the probability space $(\Omega,\mathcal{F},P)$, we need to construct a probability measure $P$.

\subsection{Constructing probability measure on $(\Omega,\mathcal{F})$} 

\subsubsection{Planes} 
\begin{definition}
We define a \textbf{plane} as a subset of $\Omega$ that can be represented as the following intersection of hyperplanes:
\[\boldsymbol{D[i_0,i_1,\ldots,i_n]} =C(0,i_0)\cap C(1,i_1)\cap\ldots \cap C(n,i_n),\]
where $n\in\mathbb{N}$ and each $i_k\in\mathbb{N}_7$.
\end{definition}

In particular, $D[i] =C(0,i)$.
For each $n\in\mathbb{N}$ there are $7^{n+1}$ planes $D[i_0,i_1,\ldots,i_n]$.

\begin{proposition}
1) If $D[i_0,i_1,\ldots,i_n]\cap D[j_0,j_1,\ldots,j_n]\neq\varnothing$, 
\[\text{then }i_0=j_0,i_1=j_1,\ldots,i_n=j_n\text{ and }
D[i_0,i_1,\ldots,i_n]= D[j_0,j_1,\ldots,j_n].\]

2) If $m<n$ and $D[i_0,i_1,\ldots,i_n]\cap D[j_0,j_1,\ldots,j_m]\neq\varnothing$,
\[\text{then }i_0=j_0,i_1=j_1,\ldots,i_m=j_m\text{ and }D[i_0,i_1,\ldots,i_n]\subseteq D[j_0,j_1,\ldots,j_m].\]

3) If planes $D_1$ and $D_2$ intersect, then $D_1\subseteq D_2$ or $D_2\subseteq D_1$.
\medskip

4) Intersection of two planes is $\varnothing$ or a plane.
\label{prop:planes}
\end{proposition}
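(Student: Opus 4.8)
The plan is to exploit the fact that a plane $D[i_0,\ldots,i_n]$ is precisely the set of sequences $\omega\in\Omega$ whose first $n+1$ coordinates are fixed to the prescribed values, i.e. $\omega_k=i_k$ for $k=0,1,\ldots,n$; this is immediate from the definition of the plane as an intersection of the hyperplanes $C(k,i_k)$. Consequently, a single point of a plane reads off its defining indices unambiguously, and all four parts follow from this observation, with parts 3) and 4) assembled from parts 1) and 2).

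For part 1), I would pick any $\omega$ in the assumed nonempty intersection. Membership in $D[i_0,\ldots,i_n]$ forces $\omega_k=i_k$ for $k=0,\ldots,n$, while membership in $D[j_0,\ldots,j_n]$ forces $\omega_k=j_k$ over the same range; comparing coordinate-by-coordinate gives $i_k=\omega_k=j_k$ for every $k\le n$, so the two index strings coincide and the two planes are literally the same set. Part 2) uses the same reading-off of coordinates, but only on the overlapping range $0\le k\le m$: a common point $\omega$ yields $i_k=\omega_k=j_k$ for $k\le m$. To obtain the inclusion, I would take an arbitrary $\sigma\in D[i_0,\ldots,i_n]$, note $\sigma_k=i_k=j_k$ for all $k\le m$, and conclude $\sigma\in D[j_0,\ldots,j_m]$, hence $D[i_0,\ldots,i_n]\subseteq D[j_0,\ldots,j_m]$.

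Parts 3) and 4) are then pure bookkeeping. For part 3), write $D_1=D[i_0,\ldots,i_n]$ and $D_2=D[j_0,\ldots,j_m]$ and assume without loss of generality $m\le n$; the case $m=n$ is settled by part 1), which gives equality (so each contains the other), and the case $m<n$ by part 2), which gives $D_1\subseteq D_2$. For part 4), if the two planes are disjoint their intersection is $\varnothing$; otherwise part 3) yields $D_1\subseteq D_2$ or $D_2\subseteq D_1$, so the intersection equals the smaller plane and is itself a plane.

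I do not expect a genuine obstacle here, since everything reduces to comparing fixed coordinates. The only points requiring a little care are keeping the index ranges straight in part 2) — the two planes have different depths $m$ and $n$, and the forced equalities hold only on the shorter range — and making the without-loss-of-generality reduction in part 3) explicit rather than leaving it implicit.
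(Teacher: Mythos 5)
Your proposal is correct and follows essentially the same route as the paper: pick a point in the nonempty intersection, read off the forced coordinate equalities, and deduce parts 3) and 4) from parts 1) and 2). Your treatment of part 3) is in fact slightly more explicit than the paper's, which simply states that it follows from the earlier parts.
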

\begin{proof}
1) Suppose $D[i_0,i_1,\ldots,i_n]\cap D[j_0,j_1,\ldots,j_n]\neq\varnothing$. Then
\[\left[\bigcap\limits_{k=0}^nC(k,i_k)\right]\cap\left[\bigcap\limits_{k=0}^nC(k,j_k)\right]\neq\varnothing\]
and there exists $\omega\in\Omega$ such that for any $k=0,1,2,\ldots,n,$ $i_k=\omega_k=j_k$.

So $D[i_0,i_1,\ldots,i_n]= D[j_0,j_1,\ldots,j_n].$
\medskip

2) Suppose $m<n$ and $D[i_0,i_1,\ldots,i_n]\cap D[j_0,j_1,\ldots,j_m]\neq\varnothing$. Then
\[\left[\bigcap\limits_{k=0}^nC(k,i_k)\right]\cap\left[\bigcap\limits_{k=0}^mC(k,j_k)\right]\neq\varnothing\]
and some $\tilde{\omega}$ belongs to this intersection. Then
\begin{equation}
\text{for any }k=0,1,2,\ldots,m,\quad i_k=\tilde{\omega}_k=j_k.
\label{eq:index_equality}
\end{equation}

Suppose $\omega\in D[i_0,i_1,\ldots,i_n]$. Then by \eqref{eq:index_equality},  for any $k=0,1,2,\ldots,m,\quad\omega_k=i_k=j_k$. So 
$\omega\in D[j_0,j_1,\ldots,j_m]$.
\medskip

This proves $D[i_0,i_1,\ldots,i_n]\subseteq D[j_0,j_1,\ldots,j_m]$.
\medskip

3) This follows from parts 1) and 2).
\medskip

4) This follows from part 3).
\end{proof}

\begin{proposition} 
1) Any element $A$ of $\mathcal{F}_0$ is $\varnothing$ or can be represented as a finite union of planes. 

2) Any element $A$ of $\mathcal{F}_0$ is $\varnothing$ or can be represented as a finite union of pairwise disjoint planes. 
\label{prop:union_planes}
\end{proposition}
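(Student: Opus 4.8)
The plan is to identify $\mathcal{F}_0$ explicitly. Let $\mathcal{G}$ denote the family consisting of $\varnothing$ together with all finite unions of planes. Since $\mathcal{F}_0$ is by definition the least algebra over $\Omega$ containing every hyperplane, part 1) will follow once I show that $\mathcal{G}$ is itself an algebra over $\Omega$ and that $\mathcal{G}$ contains every hyperplane: this forces $\mathcal{F}_0\subseteq\mathcal{G}$, which is exactly the assertion that each $A\in\mathcal{F}_0$ is $\varnothing$ or a finite union of planes.

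First I would record that every hyperplane belongs to $\mathcal{G}$. Using Proposition \ref{prop:hyperplane} part 2) to write $\Omega=\bigcup_{j=0}^6 C(k,j)$ for each coordinate $k<n$ and distributing the intersection, one expands
\[
C(n,i)=\bigcup_{i_0=0}^6\cdots\bigcup_{i_{n-1}=0}^6 D[i_0,\ldots,i_{n-1},i],
\]
a finite union of planes (for $n=0$ this degenerates to $C(0,i)=D[i]$). Next I would verify the algebra axioms for $\mathcal{G}$. Containment of $\varnothing$ is immediate, and closure under finite unions is clear from the definition of $\mathcal{G}$.

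The only real work is closure under complements. Given $A=D_1\cup\cdots\cup D_k\in\mathcal{G}$, I have $A^c=D_1^c\cap\cdots\cap D_k^c$, so it suffices to show (a) that the complement of a single plane lies in $\mathcal{G}$, and (b) that $\mathcal{G}$ is closed under finite intersections. For (a), writing $D[i_0,\ldots,i_n]^c=\bigcup_{k=0}^n C(k,i_k)^c$ by De Morgan and expanding each $C(k,i_k)^c$ via Proposition \ref{prop:hyperplane} part 3) into the six remaining hyperplanes — each itself a finite union of planes by the display above — exhibits $D[i_0,\ldots,i_n]^c$ as a finite union of planes. For (b), if $A=\bigcup_\alpha D_\alpha$ and $B=\bigcup_\beta E_\beta$ are finite unions of planes, then $A\cap B=\bigcup_{\alpha,\beta}(D_\alpha\cap E_\beta)$, and by Proposition \ref{prop:planes} part 4) each $D_\alpha\cap E_\beta$ is $\varnothing$ or a plane, so $A\cap B\in\mathcal{G}$. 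Combining (a) and (b) gives $A^c\in\mathcal{G}$, completing the verification that $\mathcal{G}$ is an algebra and hence establishing part 1).

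For part 2) I would pass from an arbitrary finite union of planes to a disjoint one using the nesting dichotomy of Proposition \ref{prop:planes} part 3). Given $A=D_1\cup\cdots\cup D_k$, discard every plane in the list that is strictly contained in another plane of the list, keeping only the planes that are maximal with respect to inclusion. Since the list is finite, each $D_i$ is contained in some maximal one, so the union of the maximal planes is still $A$; and if two distinct maximal planes intersected, Proposition \ref{prop:planes} part 3) would force one to contain the other, contradicting maximality. Hence the surviving maximal planes are pairwise disjoint, giving the required representation. The main obstacle I anticipate is the complement step, namely establishing closure of $\mathcal{G}$ under complementation; everything there hinges on reducing complements to intersections and then invoking the intersection dichotomy of Proposition \ref{prop:planes}, after which the argument is routine bookkeeping with finite unions.
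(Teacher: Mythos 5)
Your proof is correct. For part 1) your route is essentially the paper's: the paper runs a structural induction over the four construction rules for elements of $\mathcal{F}_0$, while you package the same content as ``the family $\mathcal{G}$ of finite unions of planes, together with $\varnothing$, is an algebra containing every hyperplane, hence contains $\mathcal{F}_0$''; the computational core --- expanding $C(n,i)$ into the $7^{n}$ planes $D[j_0,\ldots,j_{n-1},i]$, taking the complement of a plane via De Morgan and Proposition \ref{prop:hyperplane}.3), and reducing an intersection of finite unions to terms that are $\varnothing$ or planes via Proposition \ref{prop:planes}.4) --- is identical. (One pedantic omission: closure of $\mathcal{G}$ under complements also needs the case $A=\varnothing$, i.e.\ $\Omega=\bigcup_{i=0}^{6}D[i]\in\mathcal{G}$, which the paper states explicitly; it is trivial.) For part 2) you genuinely diverge: the paper proves disjointness by induction on the number of planes in the union, adjoining one plane at a time and using Proposition \ref{prop:planes}.3) either to absorb the new plane into an existing one or to delete the existing planes it swallows; you instead observe in a single step that the inclusion-maximal planes of the finite list already cover $A$ (every plane in the list sits below a maximal one, since strict inclusion on a finite set has no infinite ascending chains) and are pairwise disjoint, again by the nesting dichotomy of Proposition \ref{prop:planes}.3). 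Your version is shorter and avoids the re-indexing bookkeeping of the inductive step; the paper's version is more algorithmic, describing an explicit procedure for merging a new plane into an already disjoint decomposition. Both rest on exactly the same fact --- intersecting planes are nested --- and both are complete.
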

\begin{proof}
1) By the definition of algebra $\mathcal{F}_0$ its elements are constructed in the following steps.
\begin{itemize}
\item $\varnothing\in\mathcal{F}_0$.
\smallskip

\item Any hyperplane $C(n,i)\in\mathcal{F}_0$ $\;(n\in\mathbb{N},i\in\mathbb{N}_7)$.
\smallskip

\item If $B\in\mathcal{F}_0$, then $B^c\in\mathcal{F}_0$.
\smallskip

\item If $B,H\in\mathcal{F}_0$, then $B\cup H\in\mathcal{F}_0$.
\end{itemize}

Consider an arbitrary $A\in\mathcal{F}_0$. We will prove the statement by induction on construction of $A$.

a) Case $A=\varnothing$ is obvious.
\medskip

b) Case $A=C(n,i)$. By Proposition \ref{prop:hyperplane}.2), 
\[\Omega=\bigcup_{j=0}^6 C(n,j).\]
\[A=\underbrace{
\Omega\cap\Omega\ldots\cap\Omega}_{n\text{ times}}\cap C(n,i)
=\left[\bigcup_{j_0=0}^6 C(0,j_0)\right]\cap\ldots\cap\left[\bigcup_{j_{n-1}=0}^6 C(n-1,j_{n-1})\right]\cap C(n,i)\]
\[=\bigcup\limits_{j_0=0}^6\ldots\bigcup\limits_{j_{n-1}=0}^6\left[C(0,j_0)\cap\ldots\cap C(n-1,j_{n-1})\cap C(n,i)\right]\]
\[=\bigcup\limits_{j_0=0}^6\ldots\bigcup\limits_{j_{n-1}=0}^6D[j_0,\ldots,j_{n-1},i]\text{ is a finite union of planes}.\]

c) First we consider complement of a plane. 
\[\left(D[i_0,\ldots,i_n]\right)^c=\bigcup\limits_{k=0}^n \left(C(k,i_k)\right)^c
=\bigcup\limits_{k=0}^n\bigcup\limits_{\substack{j_k=0\\j_k\neq i_k}}^6 C(k,j_k)\]
by Proposition \ref{prop:hyperplane}.3).
We proved in part b) that any hyperplane is a finite union of planes. So 
\begin{equation}
\left(D[i_0,\ldots,i_n]\right)^c \text{ is a finite union of planes}.
\label{eq:complement}
\end{equation}

Now we consider the case $A=B^c$, where $B\in \mathcal{F}_0$.

If $B=\varnothing$, then by Proposition \ref{prop:hyperplane}.2), $A=\Omega=\bigcup\limits_{i=0}^6 C(0,i)=\bigcup\limits_{i=0}^6 D[i]$ is a finite union of planes.

Suppose $B\neq\varnothing$. Then by the inductive assumption $B$ is a finite union of planes:
\[B=\bigcup\limits_{k=0}^s D_k\text{ and }A=\bigcap\limits_{k=0}^s D_k^c.\]

By \eqref{eq:complement} each $D_k^c$ is  a finite union of planes:
\[D_k^c=\bigcup\limits_{l_k=0}^{r_k} D'_{l_k}.\]
\[\text{So }A=\bigcap\limits_{k=0}^s\bigcup\limits_{l_k=0}^{r_k} D'_{l_k}=\bigcup\limits_{l_1=0}^{r_1}\ldots\bigcup\limits_{l_s=0}^{r_s}\left(D'_{l_1}\cap\ldots\cap D'_{l_s}\right).\]
By Proposition \ref{prop:planes}.4), each 
$D'_{l_1}\cap\ldots\cap D'_{l_s}$ is $\varnothing$ or a plane. Then $A$ is $\varnothing$ or a finite union of planes.

d) Case $A=B\cup H$, where $B,H\in\mathcal{F}_0$. 

By the inductive assumption $B$ is a finite union of planes and so is $H$. Clearly, this is also true for $A$.
\medskip

2) Suppose $A\neq\varnothing$. Then by part 1), $A=\bigcup\limits_{k=0}^nD_k$, where each $D_k$ is a plane.
We will prove the statement by induction on $n$.

a) Case $n=1$ is obvious. 

b) Assume the statement holds for $n$ and $A=\bigcup\limits_{k=0}^{n+1}D_k$. Denote $B=\bigcup\limits_{k=0}^nD_k$. By the inductive assumption $B$ can be represented as a finite union of pairwise disjoint planes:
\begin{equation}
B=\bigcup\limits_{l=0}^mD'_l.
\label{eq:pairwise_disjoint}
\end{equation}
\begin{equation}
\text{So }A=\bigcup\limits_{l=0}^mD'_l\cup D_{n+1}.
\label{eq:disjoint}
\end{equation}

Denote $D'_{l_1},\ldots,D'_{l_r}$ all the planes from the union \eqref{eq:pairwise_disjoint}
that intersect with $D_{n+1}$. (If there are no such planes, then the statement is proven.)

If $D_{n+1}\subseteq D'_{l_i}$ for some $i=1,2,\ldots,r$, then $A=\bigcup\limits_{l=0}^mD'_l$ and the statement is proven. Otherwise each $ D'_{l_i}\subseteq D_{n+1}$ by Proposition \ref{prop:planes}.3) and we can remove each $D'_{l_i}$ from the union \eqref{eq:disjoint}:
\[A=\bigcup\limits_{\substack{l=0,\\l\neq l_1,\\\ldots,\\\l\neq l_r}}^mD'_l\cup D_{n+1};\]
getting a finite union of pairwise disjoint planes.
\end{proof}

\subsubsection{Defining $P_0$ on $\mathcal{F}_0$} 

\begin{notation}
For each $n\in\mathbb{N}$ we fix seven non-negative numbers $p_n(i),i=0,1,\ldots,6$, such that $\sum\limits_{i=0}^6 p_n(i)=1$. 
We consider these numbers fixed for the rest of the article.
\end{notation}

A simple example of such numbers is when all $p_n(i)=\dfrac{1}{7}$.

\begin{definition}
\begin{enumerate}
\item $P_0(\varnothing) =0$.
\item $P_0(C(n,i))=p_n(i)$ $\;(n\in\mathbb{N},i\in\mathbb{N}_7)$.
\medskip
\item For any plane $
D[i_0,i_1,\ldots,i_n] =C(0,i_0)\cap C(1,i_1)\cap\ldots \cap C(n,i_n)$:
\[P_0\big(D[i_0,i_1,\ldots,i_n]\big)=\prod\limits_{k=0}^n P_0\big(C(k,i_k)\big).\]
\end{enumerate}
\label{def:P_0}
\end{definition}

\begin{lemma}
If a plane $D$ equals a union of pairwise disjoint planes:
\[D=\bigcup\limits_{j=0}^k D_j,\]
then $P_0(D)=\sum\limits_{j=0}^k P_0(D_j)$.
\label{lemma:hard_proof}
\end{lemma}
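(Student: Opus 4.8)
The plan is to reduce everything to planes that fix exactly the same number of coordinates, where additivity becomes a transparent consequence of the normalization $\sum_{i=0}^6 p_n(i)=1$. Call a plane of the form $D[a_0,\dots,a_N]$ a \emph{level-$N$ plane}. First I would record that, since $D=\bigcup_{j=0}^k D_j$, each $D_j\subseteq D$; writing $D=D[i_0,\dots,i_n]$ and $D_j=D[a_0^{(j)},\dots,a_{m_j}^{(j)}]$, the containment $D_j\subseteq D$ forces $m_j\ge n$ and forces the prefix $(a_0^{(j)},\dots,a_n^{(j)})$ to equal $(i_0,\dots,i_n)$ (otherwise some point of $D_j$ would fall outside $D$). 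I then set $N=\max_j m_j$, so that $N\ge n$ and every $D_j$ has depth at most $N$.

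The engine of the proof is a refinement identity: for any plane $D[a_0,\dots,a_m]$ with $m\le N$,
\[
P_0\big(D[a_0,\dots,a_m]\big)=\sum_{a_{m+1}=0}^6\cdots\sum_{a_N=0}^6 P_0\big(D[a_0,\dots,a_m,a_{m+1},\dots,a_N]\big),
\]
coupled with the set-theoretic fact that $D[a_0,\dots,a_m]$ is the disjoint union of exactly these level-$N$ planes. The identity follows straight from Definition \ref{def:P_0}: the product $\prod_{k=0}^N p_k(a_k)$ factors as $\prod_{k=0}^m p_k(a_k)$ times $\prod_{l=m+1}^N\big(\sum_{a_l=0}^6 p_l(a_l)\big)$, and each inner sum equals $1$. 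Applying this both to $D$ and to each $D_j$ gives $P_0(D)=\sum_{E} P_0(E)$ and $P_0(D_j)=\sum_{E\subseteq D_j}P_0(E)$, where in each case $E$ ranges over the level-$N$ planes contained in the set in question.

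It then remains to verify that the level-$N$ planes contained in $D$ are partitioned by the sets $D_j$. Every level-$N$ plane $E\subseteq D$ meets some $D_j$ (because $E\subseteq\bigcup_j D_j$), and since $E$ has depth $N\ge m_j$, Proposition \ref{prop:planes} gives $E\subseteq D_j$; pairwise disjointness of the $D_j$ makes this index $j$ unique. Hence collecting the level-$N$ contributions of the individual $D_j$ reassembles precisely the level-$N$ contributions of $D$, and I conclude
\[
\sum_{j=0}^k P_0(D_j)=\sum_{j=0}^k\ \sum_{E\subseteq D_j} P_0(E)=\sum_{E\subseteq D} P_0(E)=P_0(D).
\]

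I expect the main obstacle to be organizational rather than conceptual: everything hinges on the refinement identity, which is the only place the hypothesis $\sum_{i}p_n(i)=1$ enters and which could equally be run as an induction peeling off one coordinate at a time down to level $N$. The genuinely delicate point to state carefully is the partition claim — that each level-$N$ plane of $D$ lies in exactly one $D_j$ — since this is exactly what upgrades mere disjointness of the $D_j$ into additivity of $P_0$.
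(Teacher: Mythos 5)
Your proof is correct, but it takes a genuinely different route from the paper's. The paper argues by induction on $q=\max_j \mathrm{length}(D_j)$: at each step it isolates the planes of maximal length $q+1$, proves a saturation property (its equation \eqref{eq:index_set}: if one of the seven extensions $D[\ldots,i_q,l]$ of a given prefix occurs among the $D_j$, then all seven do --- established by exhibiting explicit sequences $\omega,\omega'$ and ruling out a case via disjointness), collapses those seven planes into the single shorter plane using $\sum_l p_{q+1}(l)=1$, and then invokes the inductive hypothesis. You refine in the opposite direction: you push $D$ and every $D_j$ down to the common maximal level $N$, note that $P_0$ is preserved under this refinement (again exactly where $\sum_i p_l(i)=1$ enters), and then need only the comparatively easy partition claim that each level-$N$ plane inside $D$ lies in exactly one $D_j$ (coverage gives existence of $j$, Proposition \ref{prop:planes} upgrades nonempty intersection to containment since $N\geqslant m_j$, and pairwise disjointness gives uniqueness). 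This replaces the paper's most delicate step --- the saturation argument --- with a single counting identity, at the cost of summing over all $7^{N-m_j}$ refinements rather than peeling off one level at a time; both proofs use the normalization in the same essential way. Two small points worth making explicit in a final write-up: planes are nonempty (needed both for ``$E$ meets some $D_j$'' and for uniqueness of $j$), and the containment $E\subseteq D_j$ comes from parts 1) and 2) of Proposition \ref{prop:planes} according to whether $N=m_j$ or $N>m_j$.
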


\begin{proof}
For any plane $X$ we define its length by the following:
\[\text{if }X=D[j_0,j_1,\ldots,j_m],\text{ then }\boldsymbol{length}(X)=m.\]

Denote $D=D[i_0,i_1,\ldots,i_n].$ So $length(D)=n$. 

Suppose 
\begin{equation}
D=\bigcup\limits_{j=0}^k D_j.
\label{eq:union}
\end{equation}

Denote $M=\{D_j\,:\,j=0,1,\ldots,k\}.$ 
Next we will prove for any $X\in M$:
\begin{equation}
length(X)\geqslant n\text{ and }X=D[i_0,i_1,\ldots,i_n,i_{n+1},\ldots,i_m]\text{ for some }i_{n+1},\ldots,i_m.
\label{eq:X_form}
\end{equation}
\\
\textit{Proof of }\eqref{eq:X_form}. Any $X\in M$ has the form: $X=D[j_0,\ldots,j_m]$. $length(X)=m$.
\medskip

Assume $m<n$. By \eqref{eq:union}, 
\[X\subseteq D,\]
so $X\cap D\neq\varnothing$. By Proposition \ref{prop:planes}.2), $D[i_0,i_1,\ldots,i_n]\subseteq D[j_0,j_1,\ldots,j_m]$, that is $D\subseteq X$. 
\medskip

So $X=D$ and $m=n$, which contradicts the assumption $m<n$. $m\geqslant n$ is proven.

Thus, $X$ has the form: $X=D[j_0,j_1,\ldots,j_n,j_{n+1},\ldots,j_m]$. Since $X\subseteq D$, by Proposition \ref{prop:planes}.1), 2), $j_0=i_0,j_1=i_1,\ldots,j_n=i_n$. This completes the proof of \eqref{eq:X_form}.
\medskip

Denote $q=\max\{length(X)\,:\,X\in M\}$. By \eqref{eq:X_form}, $q\geqslant n$. We will prove:
\begin{equation}
P_0(D)=\sum\limits_{j=0}^k P_0(D_j)
\label{eq:sum_P_0}
\end{equation}
by induction on $q$.

\textit{Base step}: $q=n$.
In this case any $X\in M$ has the form $X=D[i_0,i_1,\ldots,i_n] =D$. Since elements of $M$ are pairwise disjoint, $D$ is the only element of $M$. That makes equality \eqref{eq:sum_P_0} obvious.

\textit{Inductive step}. Assume \eqref{eq:sum_P_0} holds for $q$. Let $\max\{length(X)\,:\,X\in M\}=q+1$. Denote:
\[M'=\{X\in M\,:\,length(X)\leqslant q\}\text{ and } M''=\{X\in M\,:\,length(X)=q+1\}.\]
Then $M=M'\cup M'',\;M''\neq\varnothing$.
Denote 
\[I=\{(i_{n+1},\ldots,i_q)\,:\,D[i_0,i_1,\ldots,i_n,i_{n+1},\ldots,i_q,l]\in M''\text{ for some }l\}.\] 
Fix $(i_{n+1},\ldots,i_q)\in I$. We will prove:
\begin{equation}
\text{for any }l\in\mathbb{N}_7,\,D[i_0,i_1,\ldots,i_n,i_{n+1},\ldots,i_q,l]\in M''.
\label{eq:index_set}
\end{equation}

By the definition of $I$, for some $l_0\in \mathbb{N}_7$, $X=D[i_0,i_1,\ldots,i_n,i_{n+1},\ldots,i_q,l_0]\in M''$. Without loss of generality we can assume $l_0=6$ and prove \eqref{eq:index_set} for $l=5$ (the general case is similar).

Take $\omega=(i_0,i_1,\ldots,i_n,i_{n+1},\ldots,i_q,5,0,0,\ldots)$. Then $\omega\in D$. By \eqref{eq:union}, $\omega\in Y$ for some $Y\in M=M'\cup M''$. Two cases.
\medskip

\textit{Case 1}: $Y\in M'$.
Then $Y\neq X$ and $Y=D[i_0,\ldots,i_m]$, where $n\leqslant m\leqslant q$. 
\medskip

For $\omega'=(i_0,i_1,\ldots,i_n,i_{n+1},\ldots,i_q,6,0,0,\ldots)$ we have: $\omega'\in X\cap Y$. This contradicts the fact that elements of $M$ are pairwise disjoint. Case 1 is impossible.
\medskip

\textit{Case 2}: $Y\in M''$. Since $\omega\in Y$, we have $Y=D[i_0,i_1,\ldots,i_n,i_{n+1},\ldots,i_q,5]$, that is 
\medskip
\\
$D[i_0,i_1,\ldots,i_n,i_{n+1},\ldots,i_q,5]\in M''$. This completes the proof of \eqref{eq:index_set}.

\eqref{eq:index_set} implies the following:
\[M''=\{D[i_0,i_1,\ldots,i_n,i_{n+1},\ldots,i_q,l]\,:\,(i_n,i_{n+1},\ldots,i_q)\in I\text{ and }l\in\mathbb{N}_7\}.\]

Since $P_0\big(D[i_0,i_1,\ldots,i_n,i_{n+1},\ldots,i_q,l]\big)
=P_0\big(D[i_0,i_1,\ldots,i_n,i_{n+1},\ldots,i_q]\big)\cdot
 P_0\big(C(q+1,l)\big)$, we have:
\[\sum_{X\in M''}P_0(X)
=\sum_{(i_n,i_{n+1},\ldots,i_q)\in I}\sum_{l=0}^6P_0\big(D[i_0,i_1,\ldots,i_n,i_{n+1},\ldots,i_q,l]\big)\]
\[=\sum_{(i_n,i_{n+1},\ldots,i_q)\in I}P_0\big(D[i_0,i_1,\ldots,i_n,i_{n+1},\ldots,i_q]\big)\cdot
\sum_{l=0}^6P_0\big(C(q+1,l)\big).\] 
Since the last sum equals 1, we get:
\begin{equation}
\sum_{X\in M''}P_0(X)
=\sum_{(i_n,i_{n+1},\ldots,i_q)\in I}P_0\big(D[i_0,i_1,\ldots,i_n,i_{n+1},\ldots,i_q]\big).
\label{eq:P_0_X}
\end{equation}
\[\bigcup_{l=0}^6D[i_0,i_1,\ldots,i_n,i_{n+1},\ldots,i_q,l]
=D[i_0,i_1,\ldots,i_n,i_{n+1},\ldots,i_q]\cap\left[\bigcup_{l=0}^6
C(q+1,l)\right]\]
\[=D[i_0,i_1,\ldots,i_n,i_{n+1},\ldots,i_q],\]
since the union in square brackets equals $\Omega$ by Proposition 
\ref{prop:hyperplane}.2). This implies:
\[\bigcup\{X\,:\,X\in M''\}=
\bigcup_{(i_n,i_{n+1},\ldots,i_q)\in I}\bigcup_{l=0}^6D[i_0,i_1,\ldots,i_n,i_{n+1},\ldots,i_q,l]\]
\[=\bigcup_{(i_n,i_{n+1},\ldots,i_q)\in I}D[i_0,i_1,\ldots,i_n,i_{n+1},\ldots,i_q].\]
So 
\[D=\bigcup\{X\,:\,X\in M\}=\bigcup\{X\,:\,X\in M'\}\cup\bigcup\{X\,:\,X\in M''\}\]
\[=\bigcup\{X\,:\,X\in M'\}\cup\left(
\bigcup_{(i_n,i_{n+1},\ldots,i_q)\in I}D[i_0,i_1,\ldots,i_n,i_{n+1},\ldots,i_q]\right).\]
All sets in these unions are pairwise disjoint and have lengths $\leqslant q$, so by the inductive assumption and 
\eqref{eq:P_0_X}:
\[P_0(D)=\sum_{X\in M'}P_0(X)+\sum_{(i_n,i_{n+1},\ldots,i_q)\in I}P_0\left(D[i_0,i_1,\ldots,i_n,i_{n+1},\ldots,i_q]\right)
\]
\[=\sum_{X\in M'}P_0(X)+\sum_{X\in M''}P_0(X)
=\sum_{X\in M}P_0(X)=\sum\limits_{j=0}^k P_0(D_j).\]

This completes the proof of the inductive step.
\end{proof}

\begin{lemma}
Suppose $A\in\mathcal{F}_0$ is represented as a union of pairwise disjoint planes in two ways:
\[A=\bigcup\limits_{j=0}^n D_j\;\text{ 
and }
\;A=\bigcup\limits_{k=0}^m D'_k.\]

Then \[\sum\limits_{j=0}^n P_0(D_j)=\sum\limits_{k=0}^m P_0(D'_k).\]
\label{lemma:pre_measure}
\end{lemma}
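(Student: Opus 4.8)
The plan is to pass to the common refinement of the two decompositions and then apply Lemma \ref{lemma:hard_proof} once in each direction. The idea is that $P_0(D_j)$ can be expanded as a sum over the cells $D_j\cap D'_k$, and symmetrically for $P_0(D'_k)$; both expansions produce the same double sum over the finite grid of cells, which forces the two single sums to agree.

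First I would form, for every pair $(j,k)$ with $0\leqslant j\leqslant n$ and $0\leqslant k\leqslant m$, the intersection $D_j\cap D'_k$. By Proposition \ref{prop:planes}.4) each such cell is either $\varnothing$ or a plane. Fixing $j$ and using $D_j\subseteq A=\bigcup_{k=0}^m D'_k$, I get
\[D_j=D_j\cap\bigcup_{k=0}^m D'_k=\bigcup_{k=0}^m\big(D_j\cap D'_k\big).\]
These cells are pairwise disjoint in $k$, since $(D_j\cap D'_k)\cap(D_j\cap D'_{k'})\subseteq D'_k\cap D'_{k'}=\varnothing$ for $k\neq k'$. Discarding the empty ones leaves the plane $D_j$ written as a finite union of pairwise disjoint planes, so Lemma \ref{lemma:hard_proof} applies and gives
\[P_0(D_j)=\sum_{\substack{k=0\\ D_j\cap D'_k\neq\varnothing}}^m P_0\big(D_j\cap D'_k\big)=\sum_{k=0}^m P_0\big(D_j\cap D'_k\big),\]
where the last equality reinstates the empty terms using $P_0(\varnothing)=0$ from Definition \ref{def:P_0}. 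Summing over $j$ yields $\sum_{j=0}^n P_0(D_j)=\sum_{j=0}^n\sum_{k=0}^m P_0(D_j\cap D'_k)$.

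Running the identical argument with the roles reversed, i.e. fixing $k$ and using $D'_k\subseteq A=\bigcup_{j=0}^n D_j$, gives $P_0(D'_k)=\sum_{j=0}^n P_0(D_j\cap D'_k)$ and hence $\sum_{k=0}^m P_0(D'_k)=\sum_{k=0}^m\sum_{j=0}^n P_0(D_j\cap D'_k)$. Since both double sums range over the same finite grid $\{0,\dots,n\}\times\{0,\dots,m\}$, reordering the summation shows they are equal, which is the desired conclusion.

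The only real care needed — and the point I expect to be the main subtlety rather than a deep obstacle — is the bookkeeping around empty cells: a ``plane'' here is a nonempty cylinder set, so the empty intersections must be dropped before invoking Lemma \ref{lemma:hard_proof} and only then added back via $P_0(\varnothing)=0$. One should also note that because $D_j$ (resp. $D'_k$) is a genuine plane, at least one cell in its row (resp. column) is nonempty, so the lemma is being applied to an honest decomposition. All the heavy lifting is already contained in Lemma \ref{lemma:hard_proof}; the remaining work is purely combinatorial on a finite grid.
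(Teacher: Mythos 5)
Your proposal is correct and follows essentially the same route as the paper's own proof: intersect each $D_j$ with the other decomposition to get $D_j=\bigcup_{k}(D_j\cap D'_k)$, apply Lemma \ref{lemma:hard_proof} (using $P_0(\varnothing)=0$) to obtain $P_0(D_j)=\sum_k P_0(D_j\cap D'_k)$, do the same for each $D'_k$, and equate the two double sums by exchanging the order of summation. Your extra care about discarding and reinstating empty cells is a reasonable refinement of the same argument, not a different approach.
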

\begin{proof}
Clearly each $D_j\subseteq A,$ so 
\[D_j=D_j\cap A=D_j\cap\bigcup\limits_{k=0}^m D'_k=\bigcup\limits_{k=0}^m(D_j\cap D'_k).\]

By Proposition \ref{prop:planes}.4), each $D_j\cap D'_k$ is $\varnothing$ or a plane; these planes are pairwise disjoint. Since $P_0(\varnothing)=0$, by Lemma \ref{lemma:hard_proof}  we have for any $j=0,1,2,\ldots,n:$
\[
P_0(D_j)=\sum\limits_{k=0}^m P_0(D_j\cap D'_k).\]

Similarly for any $k=0,1,2,\ldots,m:$
\[
P_0(D'_k)=\sum\limits_{j=0}^n P_0(D_j\cap D'_k).
\]

So  \[\sum\limits_{j=0}^n P_0(D_j)
=\sum\limits_{j=0}^n \sum\limits_{k=0}^m P_0(D_j\cap D'_k)
=
\sum\limits_{k=0}^m \sum\limits_{j=0}^nP_0(D_j\cap D'_k)
=
\sum\limits_{k=0}^m P_0(D'_k).\]
\end{proof}

\begin{definition}
We define function $P_0:\mathcal{F}_0\rightarrow [0,1]$ by the following.
\begin{enumerate}
\item When $A=\varnothing$ or $A$ is a plane, $P_0(A)$ has been defined in Definition \ref{def:P_0}.
\medskip
\item By Proposition \ref{prop:union_planes}.2), any non-empty $A\in\mathcal{F}_0$ can be represented as $A=\bigcup\limits_{j=0}^m D_j$, where $D_j$ are pairwise disjoint planes. We define:
\[P_0(A)=\sum\limits_{j=0}^m P_0(D_j).\]
\end{enumerate}
\end{definition}

Due to Lemma \ref{lemma:pre_measure}, this definition is valid.
\medskip

\begin{lemma}
For any $m\in\mathbb{N}$ and distinct $n_{0},n_{1},\dots,n_{m}\in\mathbb{N}$: 
\[P_0\left(\bigcap_{k=0}^mC(n_k,i_k) \right)=\prod_{k=0}^m P_0\big(C(n_k,i_k)\big).\]
\label{lemma:P_0_product}
\end{lemma}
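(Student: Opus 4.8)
The plan is to reduce the intersection to a disjoint union of planes of a single fixed length and then exploit the additive and multiplicative structure of $P_0$. Since both sides of the claimed identity are symmetric in the pairs $(n_k,i_k)$, I would first reorder the indices so that $n_0<n_1<\dots<n_m$ and set $N=n_m$. The set $\bigcap_{k=0}^m C(n_k,i_k)$ lies in $\mathcal{F}_0$ (an algebra is closed under finite intersections), but it constrains a coordinate only at the positions $n_0,\ldots,n_m$, leaving every other coordinate free; in particular it is generally not a plane, so the product formula of Definition \ref{def:P_0}.3 does not apply to it directly.

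To repair this I would ``fill in'' the free positions. Let $F=\{0,1,\ldots,N\}\setminus\{n_0,\ldots,n_m\}$. For each $j\in F$ Proposition \ref{prop:hyperplane}.2 gives $\Omega=\bigcup_{a=0}^6 C(j,a)$, so intersecting with these copies of $\Omega$ changes nothing:
\[\bigcap_{k=0}^m C(n_k,i_k)=\bigcap_{k=0}^m C(n_k,i_k)\cap\bigcap_{j\in F}\left(\bigcup_{a=0}^6 C(j,a)\right).\]
Distributing the intersection over the unions expresses the left set as a union of planes $D[a_0,\ldots,a_N]$, where $a_{n_k}=i_k$ for each $k$ while the coordinates $a_j$ with $j\in F$ range freely over $\mathbb{N}_7$. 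By Proposition \ref{prop:planes}.1 distinct index tuples yield disjoint planes, so this is a representation as a finite union of pairwise disjoint planes of length $N$.

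Now I would compute using the additive definition of $P_0$ on $\mathcal{F}_0$ (well defined by Lemma \ref{lemma:pre_measure}) together with Definition \ref{def:P_0}.3:
\[P_0\left(\bigcap_{k=0}^m C(n_k,i_k)\right)=\sum_{(a_j)_{j\in F}}\;\prod_{j=0}^N P_0\big(C(j,a_j)\big).\]
Splitting each product into the factors at the fixed positions $n_k$ (each equal to $P_0(C(n_k,i_k))$) and the factors at the free positions $j\in F$, the fixed part is constant in the summation variables and pulls out, while the remaining sum over the free coordinates factors across positions:
\[=\left(\prod_{k=0}^m P_0\big(C(n_k,i_k)\big)\right)\prod_{j\in F}\left(\sum_{a=0}^6 P_0\big(C(j,a)\big)\right).\]
Each inner sum equals $\sum_{a=0}^6 p_j(a)=1$ by the choice of the numbers $p_n(i)$, so the product over $F$ collapses to $1$ and the asserted identity follows.

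The only point demanding care is the index bookkeeping that separates the $m+1$ fixed positions from the free positions in $F$ and keeps track of disjointness (via Proposition \ref{prop:planes}.1) so that the additive definition of $P_0$ may legitimately be invoked; the distributive expansion and the factorization of the multi-index sum are routine. There is no deeper obstacle: once the intersection is rewritten as a disjoint union of equal-length planes, the whole statement reduces to the normalization $\sum_{a=0}^6 p_j(a)=1$.
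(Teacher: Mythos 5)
Your argument is correct and is essentially the same as the paper's: filling the free positions with $\Omega=\bigcup_{a=0}^6 C(j,a)$, distributing into a disjoint union of equal-length planes, applying the product formula of Definition \ref{def:P_0}, and collapsing the free sums via $\sum_{a=0}^6 p_j(a)=1$. The only difference is that you carry out the index bookkeeping in full generality, whereas the paper works the representative case $P_0\big(C(1,k)\cap C(3,l)\big)$ and asserts the general case is similar.
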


\begin{proof}
We will prove the lemma for this particular case:
\[P_0\big(C(1,k)\cap C(3,l)\big)=P_0\big(C(1,k)\big) \cdot P_0\big(C(3,l)\big).\]
The general case is proven similarly.

By Proposition \ref{prop:hyperplane}.2), we have:
\[C(1,k)\cap C(3,l)=
\Omega\cap C(1,k)\cap\Omega\cap C(3,l)
=\left[\bigcup_{i=0}^6 C(0,i)\right]\cap C(1,k)\cap\left[\bigcup_{j=0}^6 C(2,j)\right]\cap C(3,l)
\]
\[=\bigcup_{i=0}^6 \bigcup_{j=0}^6\big[C(0,i)\cap C(1,k)\cap C(2,j)\cap C(3,l)\big]
=\bigcup_{i,j=0}^6 D[i,k,j,l].\]

\[\text{So }
P_0\big(C(1,k)\cap C(3,l)\big)
=\sum_{i,j=0}^6P_0\left( D[i,k,j,l]\right)
\]
\[=\sum_{i,j=0}^6 P_0\big(C(0,i)\big) \cdot P_0\big(C(1,k)\big)\cdot P_0\big(C(2,j)\big)
\cdot P_0\big(C(3,l)\big)\]
\[=\left[\sum_{i=0}^6 P_0\big(C(0,i)\big)\right]\cdot P_0\big(C(1,k)\big)\cdot
\left[\sum_{j=0}^6 P_0\big(C(2,j)\big)\right]\cdot P_0\big(C(3,l)\big)
=P_0\big(C(1,k)\big) \cdot P_0\big(C(3,l)\big),\]
since each sum in square brackets equals 1 by the definition of $P_0$.
\end{proof}

\subsubsection{Defining $P$ on $\mathcal{F}$} 
We constructed a pre-measure $P_0$ on algebra $\mathcal{F}_0$ over $\Omega$. By Hahn–Kolmogorov extension theorem $P_0$ can be extended from $\mathcal{F}_0$ to probability measure $P$ on $\mathcal{F}$. That means: 
\[(\Omega,\mathcal{F},P)\text{  is a probability space,}\] 
\[\text{and for any }A\in\mathcal{F}_0,\;P_0(A)=P(A).\]
\medskip

Lemma \ref{lemma:P_0_product} implies the following corollary.

\begin{cor}
For any $m\in\mathbb{N}$ and distinct $n_{0},n_{1},\dots,n_{m}\in\mathbb{N}$: 
\[P\left(\bigcap_{k=0}^mC(n_k,i_k) \right)=\prod_{k=0}^m P\big(C(n_k,i_k)\big).\]
\label{corollary:P_product}
\end{cor}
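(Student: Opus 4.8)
The plan is to reduce the statement entirely to Lemma \ref{lemma:P_0_product} by way of the extension property of $P$. The only genuine point requiring justification is that every set appearing in the identity lies in the algebra $\mathcal{F}_0$, since the equality $P(A)=P_0(A)$ has been guaranteed only for $A\in\mathcal{F}_0$; once this membership is confirmed, the corollary is just the transfer of an already-proved computation from the pre-measure $P_0$ to its extension $P$.

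First I would observe that each hyperplane $C(n_k,i_k)$ belongs to $\mathcal{F}_0$ by the very definition of $\mathcal{F}_0$ as the algebra generated by all hyperplanes. Next, since $\mathcal{F}_0$ is an algebra it is closed under complementation and finite unions, hence (by De Morgan's laws) under finite intersections as well; therefore $\bigcap_{k=0}^m C(n_k,i_k)\in\mathcal{F}_0$. Both the compound event and each of its factors thus lie in $\mathcal{F}_0$, so the agreement of $P$ with $P_0$ applies to all of them.

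Then, invoking the extension property stated immediately above the corollary, I would replace $P$ by $P_0$ on the intersection and on each individual factor:
\[P\left(\bigcap_{k=0}^m C(n_k,i_k)\right)=P_0\left(\bigcap_{k=0}^m C(n_k,i_k)\right),\qquad P\big(C(n_k,i_k)\big)=P_0\big(C(n_k,i_k)\big).\]
Finally, Lemma \ref{lemma:P_0_product} supplies $P_0\left(\bigcap_{k=0}^m C(n_k,i_k)\right)=\prod_{k=0}^m P_0\big(C(n_k,i_k)\big)$, and substituting the two equalities above yields the claimed identity for $P$.

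I do not expect a real obstacle here: the substantive content is carried entirely by Lemma \ref{lemma:P_0_product}, and the corollary merely records that this independence-type product formula persists under the Hahn--Kolmogorov extension. The single step deserving attention is the verification of membership in $\mathcal{F}_0$, which is immediate from the closure properties of an algebra and is what legitimizes passing from $P$ back to $P_0$.
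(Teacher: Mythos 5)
Your proposal is correct and matches the paper's intended argument: the paper simply states that Lemma \ref{lemma:P_0_product} implies the corollary via the extension property $P=P_0$ on $\mathcal{F}_0$, and you have merely spelled out the (routine) verification that the relevant sets lie in $\mathcal{F}_0$. No issues.
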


\section{Motion of a particle in discrete space and time}
In next two sections we will derive analogues of two Newton's laws of motion from probabilistic characteristics of particle movement. 

\subsection{Analogue of variance for a random vector}
Here we introduce some preliminary concepts.
\begin{notation}
For a random 3-dimensional vector $\boldsymbol{X}=(X_1,X_2,X_3)$ we denote:
\[\boldsymbol{Tr(X)}=Var(X_1)+Var(X_2)+Var(X_3).\]
\end{notation}
Thus, $Tr(\boldsymbol{X})$ is the trace of the covariance matrix $Cov(\boldsymbol{X})$.

\begin{lemma} 
$Tr(\boldsymbol{X})=E\big[(\boldsymbol{X}-E(\boldsymbol{X}),\boldsymbol{X}-E(\boldsymbol{X}))\big]$, the expectation of the inner product of $\boldsymbol{X}-E(\boldsymbol{X})$ by itself.
\label{lemma:var}
\end{lemma}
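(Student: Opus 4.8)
The plan is to expand the inner product on the right-hand side coordinate-wise and recognize each resulting term as a one-dimensional variance, then sum. Writing $\boldsymbol{X}-E(\boldsymbol{X})=(X_1-E(X_1),\,X_2-E(X_2),\,X_3-E(X_3))$, the inner product of this vector with itself is the sum of squares of its three components. So I would begin with the identity
\[
\big(\boldsymbol{X}-E(\boldsymbol{X}),\,\boldsymbol{X}-E(\boldsymbol{X})\big)=\sum_{j=1}^3\big(X_j-E(X_j)\big)^2.
\]

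Next I would take the expectation of both sides and use linearity of expectation to move $E$ inside the finite sum:
\[
E\big[(\boldsymbol{X}-E(\boldsymbol{X}),\,\boldsymbol{X}-E(\boldsymbol{X}))\big]
=\sum_{j=1}^3 E\big[(X_j-E(X_j))^2\big].
\]
Here I am implicitly using that $E(X_j)$ is a constant, so $X_j-E(X_j)$ is a legitimate random variable whose square has a well-defined expectation. The key fact I would invoke is the standard one-dimensional identity $Var(X_j)=E\big[(X_j-E(X_j))^2\big]$, which is just the definition of variance as the expected squared deviation from the mean. Applying it termwise turns the right-hand side into $Var(X_1)+Var(X_2)+Var(X_3)$, which by the preceding notation is exactly $Tr(\boldsymbol{X})$.

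This completes the argument, since the chain of equalities reads from $Tr(\boldsymbol{X})$ through the sum of variances to the expectation of the inner product, establishing the claimed equality in both directions. There is essentially no obstacle here: the result is a routine unpacking of definitions combined with linearity of expectation. The only point requiring minor care is the implicit assumption that each $Var(X_j)$ is finite, so that the expectations are well-defined and the interchange of $E$ with the finite sum is justified; for a random walk on a finite lattice alphabet with bounded increments this integrability holds automatically, so I would note it only in passing.
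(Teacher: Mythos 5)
Your proof is correct and follows exactly the same route as the paper's: expand the inner product as $\sum_{i=1}^3(X_i-E(X_i))^2$, apply linearity of expectation, and identify each term as $Var(X_i)$. The remark about finiteness of the variances is a harmless extra observation that the paper leaves implicit.
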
 

\begin{proof}
$(\boldsymbol{X}-E(\boldsymbol{X}),\boldsymbol{X}-E(\boldsymbol{X}))
=\sum\limits_{i=1}^3(X_i-E(X_i))^2$, so 
\[E\big[(\boldsymbol{X}-E(\boldsymbol{X}),\boldsymbol{X}-E(\boldsymbol{X}))\big]=\sum\limits_{i=1}^3E\big[(X_i-E(X_i))^2\big]=\sum\limits_{i=1}^3Var(X_i)= Tr(\boldsymbol{X}).\]
\end{proof}

\subsection{Random walk on a lattice: definitions} 

\begin{notation}
1) We fix a real parameter $\boldsymbol{\tau}>0$. 

$\tau>0$ is interpreted as a unit of time. When $\tau\rightarrow 0$, we get continuous time.
\medskip

2) We fix a real number $\boldsymbol{c}>0$ and denote $\boldsymbol{\varepsilon} =c\tau$.

3) We consider a 3-dimensional \textbf{lattice}:
$$\varepsilon\mathbb{Z}^3= \{(\varepsilon l_1,\varepsilon l_2,\varepsilon l_3)\, :\, l_1,l_2,l_3\in \mathbb{Z}\}.$$

When $\tau\rightarrow 0$, then $\varepsilon\rightarrow0$ and this lattice becomes the continuous 3-dimensional space $\mathbb{R}^3$.
\medskip

4) We will use the following vectors of the standard basis in $\mathbb{R}^3$:
\[\mathbf{e}_1=(1,0,0),\mathbf{e}_2=(0,1,0),\text{ and }\mathbf{e}_3=(0,0,1).\]

5) We fix the following 7 vectors:
\begin{itemize}
\item $\mathbf{s}(0)=(0,0,0)$;
\medskip
\item $\mathbf{s}(1)=\varepsilon\mathbf{e}_1$;$\quad\mathbf{s}(2)=\varepsilon\mathbf{e}_2$;$\quad\mathbf{s}(3)=\varepsilon\mathbf{e}_3$;
\medskip
\item $\mathbf{s}(4)=-\varepsilon\mathbf{e}_1$;$\quad\mathbf{s}(5)=-\varepsilon\mathbf{e}_2$;$\quad\mathbf{s}(6)=-\varepsilon\mathbf{e}_3$.
\end{itemize}
\medskip

The vectors $\mathbf{s}(1),\mathbf{s}(2),\mathbf{s}(3)$ represent
 steps in the directions $\mathbf{e}_1,\mathbf{e}_2,\mathbf{e}_3$, respectively, and the vectors $\mathbf{s}(4),\mathbf{s}(5),\mathbf{s}(6)$ represent
 steps in the opposite directions.
\end{notation}

\begin{lemma} 
\label{var1}
Suppose a random vector $\boldsymbol{X}$ has a discrete distribution with values $\mathbf{s}(0),$ $\mathbf{s}(1),\ldots,$ $\mathbf{s}(6)$. Then $Tr(\boldsymbol{X})\leqslant \varepsilon^2$.
\label{lemma:Tr}
\end{lemma}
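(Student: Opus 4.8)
The plan is to reduce the claim to Lemma \ref{lemma:var} and then bound the expected squared length of $\boldsymbol{X}$ value-by-value, since all seven possible values have length at most $\varepsilon$. First I would invoke Lemma \ref{lemma:var} to rewrite the quantity of interest as
\[Tr(\boldsymbol{X})=E\big[(\boldsymbol{X}-E(\boldsymbol{X}),\boldsymbol{X}-E(\boldsymbol{X}))\big],\]
that is, the expected squared distance of $\boldsymbol{X}$ from its mean. This is the natural starting point because it converts a sum of three coordinate variances into a single expectation of an inner product, which interacts cleanly with the geometry of the step vectors $\mathbf{s}(i)$.

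Next, writing $\boldsymbol{\mu}=E(\boldsymbol{X})$ and expanding the inner product, I would use linearity of expectation together with $E(\boldsymbol{X})=\boldsymbol{\mu}$ to obtain the standard decomposition
\[E\big[(\boldsymbol{X}-\boldsymbol{\mu},\boldsymbol{X}-\boldsymbol{\mu})\big]=E\big[(\boldsymbol{X},\boldsymbol{X})\big]-(\boldsymbol{\mu},\boldsymbol{\mu}).\]
Since $(\boldsymbol{\mu},\boldsymbol{\mu})\geqslant 0$, dropping that nonnegative term gives the inequality $Tr(\boldsymbol{X})\leqslant E\big[(\boldsymbol{X},\boldsymbol{X})\big]$. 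This discarded term is precisely what makes the estimate an inequality rather than an equality, and it is the only place where information is lost.

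The final step is to bound $E\big[(\boldsymbol{X},\boldsymbol{X})\big]$ by inspecting the support of $\boldsymbol{X}$. Because $\boldsymbol{X}$ takes values only among $\mathbf{s}(0),\mathbf{s}(1),\ldots,\mathbf{s}(6)$, and each step vector satisfies $(\mathbf{s}(0),\mathbf{s}(0))=0$ and $(\mathbf{s}(i),\mathbf{s}(i))=\varepsilon^2$ for $i=1,\ldots,6$, we have $(\boldsymbol{X},\boldsymbol{X})\leqslant\varepsilon^2$ with probability $1$. Taking expectations yields $E\big[(\boldsymbol{X},\boldsymbol{X})\big]\leqslant\varepsilon^2$, and combining this with the previous inequality gives $Tr(\boldsymbol{X})\leqslant\varepsilon^2$, as required. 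I do not anticipate any genuine obstacle here: the argument is a short chain of elementary estimates, and the only point requiring minor care is the variance decomposition in the second step, which is routine. As a sanity check, the bound is sharp, attained for instance when $\boldsymbol{X}$ equals $\mathbf{s}(1)$ and $\mathbf{s}(4)$ each with probability $\tfrac12$, so that $\boldsymbol{\mu}=\mathbf 0$ and the dropped term vanishes.
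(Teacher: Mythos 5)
Your proof is correct and follows essentially the same route as the paper: both invoke Lemma \ref{lemma:var}, reduce to the decomposition $Tr(\boldsymbol{X})=E\big[(\boldsymbol{X},\boldsymbol{X})\big]-(\boldsymbol{\mu},\boldsymbol{\mu})$, and then bound $E\big[(\boldsymbol{X},\boldsymbol{X})\big]$ by $\varepsilon^2$. The only cosmetic difference is that you use the almost-sure pointwise bound $(\boldsymbol{X},\boldsymbol{X})\leqslant\varepsilon^2$ where the paper computes $E(X_i^2)$ coordinate by coordinate; both yield the same estimate.
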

\begin{proof}
Denote $\boldsymbol{M}=E(\boldsymbol{X})$. Then 
\begin{equation}
\boldsymbol{M}=\sum_{j=0}^6\mathbf{s}(j)P(\boldsymbol{X}=\mathbf{s}(j)).
\label{eq:expectation_1}
\end{equation}
\[(\boldsymbol{X},\boldsymbol{M})=\sum\limits_{i=1}^3M_i\cdot X_i\;\text{ and }\;E(\boldsymbol{X},\boldsymbol{M})=\sum\limits_{i=1}^3E(M_iX_i)=\sum\limits_{i=1}^3\sum\limits_{j=0}^6M_is_{i}(j)P(\boldsymbol{X}=\mathbf{s}(j))\]
\[=\sum\limits_{i=1}^3M_i\sum\limits_{j=0}^6s_{i}(j)P(\boldsymbol{X}=\mathbf{s}(j))
=\sum\limits_{i=1}^3M_i\cdot M_i \text{ by \eqref{eq:expectation_1}. So}\]
\begin{equation}
E(\boldsymbol{X},\boldsymbol{M})=(\boldsymbol{M},\boldsymbol{M}).
\label{eq:expectation_2}
\end{equation}

By Lemma \ref{lemma:var}, $Tr(\boldsymbol{X})=E\big[(\boldsymbol{X}-\boldsymbol{M},\boldsymbol{X}-\boldsymbol{M})\big]
=E(\boldsymbol{X},\boldsymbol{X})- 2E(\boldsymbol{X},\boldsymbol{M})+(\boldsymbol{M},\boldsymbol{M})$, since $\boldsymbol{M}$ is a constant vector. So by \eqref{eq:expectation_2},
\[
Tr(\boldsymbol{X})=
E(\boldsymbol{X},\boldsymbol{X})-(\boldsymbol{M},\boldsymbol{M}).\]

To complete the proof, it is sufficient to show: $E(\boldsymbol{X},\boldsymbol{X})\leqslant \varepsilon^2.$

\[E(X_1^2)=\sum_{j=0}^6s_1^2(j)P(\boldsymbol{X}=\mathbf{s}(j))
=s_1^2(1)P(\boldsymbol{X}=\mathbf{s}(1))+s_1^2(4)P(\boldsymbol{X}=\mathbf{s}(4))\]
\[=\varepsilon^2 P(\boldsymbol{X}=\mathbf{s}(1))+\varepsilon^2P(\boldsymbol{X}=\mathbf{s}(4))
\text{ by the definition of }
\mathbf{s}(j), \;j=0,1,2\ldots,6.\] So
\begin{equation}
E(X_1^2)=\varepsilon^2 \big[P(\boldsymbol{X}=\mathbf{s}(1))+P(\boldsymbol{X}=\mathbf{s}(4))\big]
\label{eq:X_square_1}.
\end{equation}
Similarly,
\begin{equation}
E(X_2^2)=\varepsilon^2 \big[P(\boldsymbol{X}=\mathbf{s}(2))+P(\boldsymbol{X}=\mathbf{s}(5))\big]
\label{eq:X_square_2},
\end{equation}

\begin{equation}
E(X_3^2)=\varepsilon^2 \big[P(\boldsymbol{X}=\mathbf{s}(3))+P(\boldsymbol{X}=\mathbf{s}(6))\big]
\label{eq:X_square_3}.
\end{equation}

Adding \eqref{eq:X_square_1} - \eqref{eq:X_square_3}, we get:
\[E(\boldsymbol{X},\boldsymbol{X})
=\sum\limits_{i=1}^3
E(X_i^2)
=\varepsilon^2 \sum\limits_{j=1}^6P(\boldsymbol{X}=\mathbf{s}(j))
\leqslant \varepsilon^2.\]

\end{proof}

\begin{definition}\label{def:walk}
1) We define 3-dimensional random vectors $\mathbf{S}_n$ on $(\Omega,\mathcal{F},P)$ by the following:
\[\mathbf{S}_n(\omega)=\mathbf{s}(\omega_n),\;n\in\mathbb{N}.\]

2) We call a \textbf{particle walk} the sequence $\mathbf{R}_0, \mathbf{R}_1,\mathbf{R}_2,\mathbf{R}_3,\ldots$ of random vectors  defined by:
\[\mathbf{R}_0(\omega)=\mathbf{s}(0)=(0,0,0)\text{ for any }\omega\in\Omega,\]
\[\text{for }n\geqslant1,\mathbf{R}_n=\mathbf{S}_0+\mathbf{S}_1+\ldots+\mathbf{S}_{n-1}.\]
Thus, $\mathbf{R}_0, \mathbf{R}_1,\mathbf{R}_2,\mathbf{R}_3,\ldots$ is a random walk on the probability space $(\Omega,\mathcal{F},P)$. 
\medskip

$\mathbf{R}_n$ is interpreted as the \textbf{position} of the particle at time $n\tau$. 

We call $\mathbf{S}_n$ a \textbf{step} of the particle walk; it is interpreted as displacement of the particle over the time interval $[n\tau,n\tau+\tau]$. 
\end{definition}

The particle walk $\mathbf{R}_0, \mathbf{R}_1,\mathbf{R}_2,\mathbf{R}_3,\ldots$ models movement of a particle on the lattice $\varepsilon \mathbb{Z}^3$: at each moment $n\tau$ in time the particle rests or moves randomly in one of the 6 directions: $\pm\mathbf{e}_1,\pm\mathbf{e}_2,\pm\mathbf{e}_3$. 
For each $\omega\in\Omega$ the sequence $(\mathbf{R}_0(\omega),\mathbf{R}_1(\omega),\mathbf{R}_2(\omega),\ldots)$ represents a possible path/trajectory of the particle.

\begin{proposition} \label{indep}
1) $P(\mathbf{S}_n=\mathbf{s}(j))=p_n(j)$ for any $n\in\mathbb{N}$, $j\in\mathbb{N}_7$.
\medskip

2) The random vectors $\mathbf{S}_0, \mathbf{S}_1,\mathbf{S}_2,\mathbf{S}_3,\ldots$ are independent.
\label{lemma:steps}
\end{proposition}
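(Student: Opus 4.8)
The plan is to reduce both statements to facts about the hyperplanes $C(n,i)$ and then to invoke Corollary \ref{corollary:P_product}. The key observation is that the seven step vectors $\mathbf{s}(0),\ldots,\mathbf{s}(6)$ are pairwise distinct, since $\varepsilon=c\tau>0$ forces their nonzero coordinates to differ in value or sign. Hence for any $n\in\mathbb{N}$ and $j\in\mathbb{N}_7$ the equation $\mathbf{s}(\omega_n)=\mathbf{s}(j)$ is equivalent to $\omega_n=j$, so the event $\{\mathbf{S}_n=\mathbf{s}(j)\}$ coincides exactly with the hyperplane $C(n,j)$ by the definition of $\mathbf{S}_n$ in Definition \ref{def:walk}.

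Given this identification, part 1) is immediate: $P(\mathbf{S}_n=\mathbf{s}(j))=P(C(n,j))=p_n(j)$, using the value of $P$ on hyperplanes fixed in Definition \ref{def:P_0} together with the fact that $P=P_0$ on $\mathcal{F}_0$.

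For part 2) I would first recall that the family $\mathbf{S}_0,\mathbf{S}_1,\ldots$ is independent precisely when, for every finite set of distinct indices $n_0,\ldots,n_m$ and every choice of Borel sets $B_0,\ldots,B_m$, the probability of $\bigcap_k\{\mathbf{S}_{n_k}\in B_k\}$ factorizes into the product of the $P(\mathbf{S}_{n_k}\in B_k)$. Since each $\mathbf{S}_n$ takes only the seven values $\mathbf{s}(0),\ldots,\mathbf{s}(6)$, it suffices to verify this factorization on singletons, i.e.\ to show
\[
P\!\left(\bigcap_{k=0}^m\{\mathbf{S}_{n_k}=\mathbf{s}(j_k)\}\right)=\prod_{k=0}^m P(\mathbf{S}_{n_k}=\mathbf{s}(j_k))
\]
for all $j_0,\ldots,j_m\in\mathbb{N}_7$. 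Using the identification $\{\mathbf{S}_{n_k}=\mathbf{s}(j_k)\}=C(n_k,j_k)$ from the first paragraph, the left-hand side becomes $P\big(\bigcap_{k=0}^m C(n_k,j_k)\big)$, which by Corollary \ref{corollary:P_product} equals $\prod_{k=0}^m P(C(n_k,j_k))=\prod_{k=0}^m p_{n_k}(j_k)$; and by part 1) this is exactly the right-hand side.

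The only nonroutine point is the passage from this singleton factorization to independence for arbitrary Borel sets. I would handle it by writing each event $\{\mathbf{S}_{n_k}\in B_k\}$ as the finite disjoint union $\bigcup_{j:\,\mathbf{s}(j)\in B_k}\{\mathbf{S}_{n_k}=\mathbf{s}(j)\}$, expanding the intersection over these finite unions, and applying finite additivity of $P$ together with the singleton factorization; distributivity of multiplication over addition then recombines the terms into $\prod_{k=0}^m P(\mathbf{S}_{n_k}\in B_k)$. Equivalently, one may note that $\sigma(\mathbf{S}_n)$ is the finite $\sigma$-algebra generated by the atoms $C(n,0),\ldots,C(n,6)$, so independence of the $\sigma(\mathbf{S}_n)$ reduces to independence of these generating atoms, which is again Corollary \ref{corollary:P_product}. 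I expect this bookkeeping to be the main obstacle, though a routine one, the genuine content having already been supplied by the corollary.
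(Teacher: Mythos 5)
Your proposal is correct and follows essentially the same route as the paper: identify $\{\mathbf{S}_n=\mathbf{s}(j)\}$ with the hyperplane $C(n,j)$, read off part 1) from the definition of $P$ on hyperplanes, and obtain part 2) from Corollary \ref{corollary:P_product}. The only difference is that you spell out the routine reduction from arbitrary Borel sets to singleton values (and note that the seven step vectors are pairwise distinct), details the paper leaves implicit.
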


The first part of this proposition means: $p_n(1)$, $p_n(2)$, $p_n(3)$ are the probabilities that at time $n\tau$ the particle moves by one step in the directions $\mathbf{e}_1$, $\mathbf{e}_2$, $\mathbf{e}_3$, respectively, and $p_n(4)$, $p_n(5)$, $p_n(6)$ are the same probabilities for the directions $-\mathbf{e}_1$, $-\mathbf{e}_2$, $-\mathbf{e}_3$. 

$p_n(0)$ is the probability that at time $n\tau$ the particle rests.

\begin{proof}
$\{\mathbf{S}_n=\mathbf{s}(j)\}=\{\omega\in\Omega\;:\;\mathbf{s}(\omega_{n})=\mathbf{s}(j)\}=\{\omega\in\Omega\;:\;\omega_n=j\}=C(n,j)$. Thus,
\begin{equation}
\{\mathbf{S}_n=\mathbf{s}(j)\}=C(n,j).
\label{eq:C_nj}
\end{equation}

1) $P(\mathbf{S}_n=\mathbf{s}(j))=P(C(n,j))=p_n(j)$ by the definition of probability $P$.
\medskip

2) For any $k\geqslant1$, $j_0,j_1,\ldots,j_k\in\mathbb{N}_7$, and distinct $n_1,n_2,\ldots,n_k\in\mathbb{N}$:
\[\{\mathbf{S}_{n_1}=\mathbf{s}(j_1),\ldots,\mathbf{S}_{n_k}=\mathbf{s}(j_k)\}
=\{\omega\in\Omega\;:\;\mathbf{s}(\omega_{n_1})=\mathbf{s}(j_1),\ldots,\mathbf{s}(\omega_{n_k})=\mathbf{s}(j_k)\}\]
\[=\{\omega\in\Omega\;:\;\omega_{n_1}=j_1,\ldots,\omega_{n_k}=j_k\}=C(n_1,j_1)\cap\ldots\cap C(n_k,j_k).\]
\[\text{So }
P\big(\mathbf{S}_{n_1}=\mathbf{s}(j_1),\ldots,\mathbf{S}_{n_k}=\mathbf{s}(j_k)\big) =P\big(C(n_1,j_1)\cap\ldots\cap C(n_k,j_k)\big)\]
\[=P\big(C(n_1,j_1)\big)\cdot\ldots\cdot P\big(C(n_k,j_k)\big)
=P\big(\mathbf{S}_{n_1}=\mathbf{s}(j_1)\big)\cdot\ldots\cdot P\big(\mathbf{S}_{n_k}=\mathbf{s}(j_k)\big).\]

Here we used Corollary \ref{corollary:P_product} and \eqref{eq:C_nj}. 
\end{proof}

\begin{definition}
1) \textbf{Velocity} of the particle at time $n\tau$ is $\mathbf{v}(n)= \dfrac{E(\mathbf{S}_n)}{\tau}$.
\medskip

2) \textbf{Acceleration} of the particle at time $n\tau$ is $\mathbf{a}(n)=
\dfrac{\mathbf{v}(n+1)-\mathbf{v}(n)}{\tau}$.
\label{def:velocity}
\end{definition}

We use this definition for acceleration because it is the change of velocity over unit of time.

\begin{lemma}
$\mathbf{v}(n)=\dfrac{\varepsilon}{\tau}\sum\limits_{i=1}^3\big[p_n(i) -p_n(i+3)\big]\mathbf{e}_i.$
\label{lemma:velocity}
\end{lemma}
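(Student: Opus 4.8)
The plan is to compute $\mathbf{v}(n)$ directly from its definition $\mathbf{v}(n)=E(\mathbf{S}_n)/\tau$ by evaluating the expectation of the discrete random vector $\mathbf{S}_n$. First I would recall from Proposition \ref{indep}.1) that $P(\mathbf{S}_n=\mathbf{s}(j))=p_n(j)$ for each $j\in\mathbb{N}_7$, so that $\mathbf{S}_n$ is a discrete random vector supported on the seven step vectors $\mathbf{s}(0),\mathbf{s}(1),\ldots,\mathbf{s}(6)$ with these known weights. The expectation is then the weighted sum
\[
E(\mathbf{S}_n)=\sum_{j=0}^6\mathbf{s}(j)\,P(\mathbf{S}_n=\mathbf{s}(j))=\sum_{j=0}^6\mathbf{s}(j)\,p_n(j).
\]

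Next I would substitute the explicit coordinates of the step vectors fixed in the Notation. Since $\mathbf{s}(0)=(0,0,0)$, the $j=0$ term drops out, and the remaining six vectors are precisely $\pm\varepsilon\mathbf{e}_i$. Pairing the contribution $p_n(i)\,\varepsilon\mathbf{e}_i$ coming from $\mathbf{s}(i)=\varepsilon\mathbf{e}_i$ with the contribution $p_n(i+3)\,(-\varepsilon\mathbf{e}_i)$ coming from $\mathbf{s}(i+3)=-\varepsilon\mathbf{e}_i$, for $i=1,2,3$, gives
\[
E(\mathbf{S}_n)=\varepsilon\sum_{i=1}^3\big[p_n(i)-p_n(i+3)\big]\mathbf{e}_i.
\]
Dividing by $\tau$ then yields the claimed formula for $\mathbf{v}(n)$.

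There is no substantive obstacle here: the argument is a routine linearity-of-expectation computation once the distribution of $\mathbf{S}_n$ supplied by Proposition \ref{indep} is in hand. The only point requiring mild care is the bookkeeping that matches each step vector $\mathbf{s}(i)$ with its opposite $\mathbf{s}(i+3)=-\mathbf{s}(i)$ for $i=1,2,3$, since this pairing is exactly what produces the differences $p_n(i)-p_n(i+3)$ appearing in the statement.
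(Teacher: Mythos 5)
Your proposal is correct and follows essentially the same route as the paper's proof: both compute $E(\mathbf{S}_n)=\sum_{j=0}^6\mathbf{s}(j)\,p_n(j)$ via Proposition \ref{indep}.1), drop the $j=0$ term since $\mathbf{s}(0)=\mathbf{0}$, and pair $\mathbf{s}(i)=\varepsilon\mathbf{e}_i$ with $\mathbf{s}(i+3)=-\varepsilon\mathbf{e}_i$ to obtain the differences $p_n(i)-p_n(i+3)$. No gaps.
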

\begin{proof}
Using Proposition \ref{lemma:steps}.1), we get:
\[\mathbf{v}(n)=
\dfrac{1}{\tau}E(\mathbf{S}_n)= \dfrac{1}{\tau}\sum_{j=0}^6\mathbf{s}(j) P\big(\mathbf{S}_n =\mathbf{s}(j)\big) =\dfrac{1}{\tau}\sum_{j=1}^6\mathbf{s}(j) p_n(j) \]
\[=\dfrac{\varepsilon}{\tau}\sum_{i=1}^3 p_n(i)\mathbf{e}_i-\dfrac{\varepsilon}{\tau}\sum_{i=1}^3 p_n(i+3)\mathbf{e}_i
=\dfrac{\varepsilon}{\tau}\sum\limits_{i=1}^3\big[p_n(i) -p_n(i+3)\big]\mathbf{e}_i.\]
\end{proof}

\section{Motion with constant velocity}

Here we state an axiom about probabilities of particle motion under zero resultant force and derive an analogue of Newton's first law of motion. 

\textbf{Axiom 1.}
\textit{For any $n\in\mathbb{N}$, if no force acts on the particle during time $[n\tau,(n+1)\tau]$, then $p_{n+1}(j)=p_n(j)$ for any $j\in\mathbb{N}_7$.}
\medskip

In other words, this axiom states that with no force acting on the particle the probabilities of its movement in each direction stay constant.
\medskip

Instead of Axiom 1 we will use its weaker form - Axiom 1*.

\textbf{Axiom 1*.}
\textit{For any $n\in\mathbb{N}$, if the resultant force on the particle during time interval $[n\tau,(n+1)\tau]$ equals zero, then $p_{n+1}(i) -p_{n+1}(i+3)=p_n(i)-p_n(i+3)$ for each $i\in\{1,2,3\}$.}
\medskip

We refer to a zero resultant force in this axiom because it is well-known that motion under zero resultant force is the same as under no force. Clearly, Axiom 1* follows from Axiom 1. Due to Lemma \ref{lemma:velocity}, Axiom 1* implies that with zero resultant force the particle's velocity is constant during the interval.

\begin{theorem} 
Suppose the resultant force on the particle over time interval  $[0,N\tau]$ equals zero ($N\in\mathbb{N}$). Then the following hold.

1) The particle moves with constant velocity during the time interval $[0,N\tau]$, that is the velocity 
$\mathbf{v}=\mathbf{v}(n)$ is the same for all $n\leqslant N$.
\medskip

2) For any $n\leqslant N$: 
\[E(\mathbf{R}_n) =n\tau\mathbf{v}.\] 

3) Suppose $N\rightarrow \infty$ in such a way that $N\tau=Const$ (so the time interval $[0,N\tau]$ is fixed). Then 
\[Tr(\mathbf{R}_N)\rightarrow 0.\] 
\label{theorem:Newton_1}
\end{theorem}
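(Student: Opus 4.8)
The plan is to treat the three parts in order, exploiting throughout that $\mathbf{R}_N$ is a sum of independent steps. For part 1, note that a zero resultant force on $[0,N\tau]$ is in particular a zero resultant force on each subinterval $[n\tau,(n+1)\tau]$ for $n=0,1,\dots,N-1$. Axiom 1* therefore gives $p_{n+1}(i)-p_{n+1}(i+3)=p_n(i)-p_n(i+3)$ for each such $n$ and each $i\in\{1,2,3\}$, and a one-line induction on $n$ shows the quantity $p_n(i)-p_n(i+3)$ does not depend on $n$ for $0\le n\le N$. Feeding this into Lemma \ref{lemma:velocity} shows $\mathbf{v}(n)$ equals a single vector $\mathbf{v}$ for all $n\le N$, proving part 1. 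Part 2 is then immediate: by linearity of expectation $E(\mathbf{R}_n)=\sum_{k=0}^{n-1}E(\mathbf{S}_k)$, and since $E(\mathbf{S}_k)=\tau\mathbf{v}(k)=\tau\mathbf{v}$ by the definition of velocity together with part 1, the sum collapses to $n\tau\mathbf{v}$.

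For part 3 the key observation is that independence of the steps lets the variances add. By Proposition \ref{lemma:steps}.2 the vectors $\mathbf{S}_0,\dots,\mathbf{S}_{N-1}$ are independent, so for each fixed coordinate $i$ the scalar variables $(\mathbf{S}_0)_i,\dots,(\mathbf{S}_{N-1})_i$ are independent and hence $Var\big((\mathbf{R}_N)_i\big)=\sum_{k=0}^{N-1}Var\big((\mathbf{S}_k)_i\big)$. Summing over $i=1,2,3$ yields $Tr(\mathbf{R}_N)=\sum_{k=0}^{N-1}Tr(\mathbf{S}_k)$. Each $\mathbf{S}_k$ takes values among $\mathbf{s}(0),\dots,\mathbf{s}(6)$, so Lemma \ref{lemma:Tr} gives $Tr(\mathbf{S}_k)\le\varepsilon^2$, whence $Tr(\mathbf{R}_N)\le N\varepsilon^2=N(c\tau)^2=c^2\,(N\tau)\,\tau$. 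Holding $N\tau$ constant while $N\to\infty$ forces $\tau\to0$, so the right-hand side tends to $0$ and $Tr(\mathbf{R}_N)\to0$.

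The main obstacle is conceptual rather than computational and sits in part 3: one must see the correct way to regroup the bound as $N\varepsilon^2=c^2(N\tau)\tau$. The variance accumulates linearly in the number of steps $N$, while each step contributes variance of order $\varepsilon^2=c^2\tau^2$, quadratic in $\tau$; fixing the physical time $N\tau$ converts one factor of $\tau$ into a constant and leaves a single factor $\tau\to0$, so the quadratic shrinking of each step's variance beats the linear growth in step count. The only other point requiring care is the passage from independence of the vectors $\mathbf{S}_k$ to independence of their $i$-th coordinates, which is exactly what licenses the additivity of variance used above.
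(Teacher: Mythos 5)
Your proposal is correct and follows essentially the same route as the paper: Axiom 1* plus Lemma \ref{lemma:velocity} for part 1, linearity of expectation for part 2, and additivity of $Tr$ over the independent steps combined with the bound $Tr(\mathbf{S}_k)\leqslant\varepsilon^2$ from Lemma \ref{lemma:Tr} for part 3. Your regrouping $N\varepsilon^2=c^2(N\tau)\tau$ is just an equivalent rewriting of the paper's bound $(cC)^2/N$, so there is nothing substantive to distinguish the two arguments.
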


This is interpretation of Theorem \ref{theorem:Newton_1}: 
when no force acts on the particle during a large time interval, then: 
\begin{itemize}
\item the particle's motion follows approximately the Newton's first law of motion during this time interval: the motion is uniform and is along a straight line (parts 1) and 2) of the theorem);
\item the deviation from this law gets very small as the time unit $\tau$ approaches zero, i.e.  time becomes continuous (part 3) of the theorem). The time unit $\tau=\dfrac{Const}{N}$, so $\tau\rightarrow 0$ as $N\rightarrow \infty$.
\end{itemize}

\begin{proof} 
1) By Lemma \ref{lemma:velocity}, 
$\mathbf{v}(n)=\dfrac{\varepsilon}{\tau}\sum\limits_{i=1}^3\big[p_n(i) -p_n(i+3)\big]\mathbf{e}_i.$
\medskip

Denote $a_i=p_n(i)-p_n(i+3)$. 
By Axiom 1*, $a_i$ does not depend on $n\in[0,N]$ for any $i\in\{1,2,3\}$. 
So $\mathbf{v}(n)=\dfrac{\varepsilon}{\tau}\sum\limits_{i=1}^3a_i\mathbf{e}_i$ is the same for all $n\leqslant N$.
\medskip

2) Suppose $n\leqslant N$. By the definition of velocity, $E(\mathbf{S}_n)=\tau\mathbf{v}(n)= \tau\mathbf{v}$ by part 1).
\medskip

$\mathbf{R}_n=\mathbf{S}_0+\mathbf{S}_1+\ldots+\mathbf{S}_{n-1}$, so
\[E(\mathbf{R}_n)=
\sum_{i=0}^{n-1}
E(\mathbf{S}_i)
=\sum_{i=0}^{n-1}
\tau\mathbf{v}
=n\tau\mathbf{v}.\]
\medskip

3) Suppose $N\rightarrow \infty$ in such a way that $N\tau=C,$ where $C$ is a constant.
\medskip

By Lemma \ref{lemma:Tr}, $Tr(\boldsymbol{S}_n)\leqslant \varepsilon^2$
for any $n\in\mathbb{N}$.
By Proposition \ref{lemma:steps}.2), the random vectors $\mathbf{S}_0, \mathbf{S}_1,\mathbf{S}_2,\mathbf{S}_3,\ldots$ are independent. So
\[Tr(\boldsymbol{R}_n)=
\sum_{i=0}^{n-1}
Tr(\boldsymbol{S}_i)\leqslant n\varepsilon^2\leqslant N\varepsilon^2.\]

Since $\varepsilon=c\tau=\dfrac{cC}{N}$, we have
\[Tr(\boldsymbol{R}_n)\leqslant N\varepsilon^2\leqslant N 
\left(\dfrac{cC}{N}\right)^2=\dfrac{(cC)^2}{N}\rightarrow 0\text{ as }N\rightarrow \infty.\]
\end{proof}

\section{Accelerated motion}

In this section we consider motion of the particle under a non-zero force.

\begin{proposition}[Recurrence relation]
Motion of the particle under any force satisfies the following recurrence relation:
\[E(\mathbf{R}_{n+2})=2E(\mathbf{R}_{n+1}) -E(\mathbf{R}_n)+ \tau^2\mathbf{a}(n).\]
\end{proposition}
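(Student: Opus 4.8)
The plan is to reduce the claimed identity to a telescoping computation on second differences of $E(\mathbf{R}_n)$. First I would rearrange the statement: the desired recurrence is equivalent to the second-difference identity $E(\mathbf{R}_{n+2})-2E(\mathbf{R}_{n+1})+E(\mathbf{R}_n)=\tau^2\mathbf{a}(n)$, which is the natural discrete analogue of ``second derivative of position equals acceleration.'' Writing it this way makes clear that the left side should split into a difference of two consecutive first differences.

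Next I would compute the first differences. From Definition \ref{def:walk}, the relation $\mathbf{R}_{m+1}=\mathbf{R}_m+\mathbf{S}_m$ holds for every $m\geqslant 0$ (including $m=0$, since $\mathbf{R}_0=\mathbf{0}$ and $\mathbf{R}_1=\mathbf{S}_0$), so by linearity of expectation $E(\mathbf{R}_{m+1})-E(\mathbf{R}_m)=E(\mathbf{S}_m)$. By Definition \ref{def:velocity}, $E(\mathbf{S}_m)=\tau\,\mathbf{v}(m)$. Applying this for $m=n+1$ and $m=n$ and subtracting, the middle terms telescope:
\[
E(\mathbf{R}_{n+2})-2E(\mathbf{R}_{n+1})+E(\mathbf{R}_n)=\tau\,\mathbf{v}(n+1)-\tau\,\mathbf{v}(n)=\tau\big[\mathbf{v}(n+1)-\mathbf{v}(n)\big].
\]
Finally, the definition of acceleration gives $\mathbf{v}(n+1)-\mathbf{v}(n)=\tau\,\mathbf{a}(n)$, and substituting yields $\tau^2\,\mathbf{a}(n)$, which is exactly the rearranged identity; moving the $E(\mathbf{R}_{n+1})$ and $E(\mathbf{R}_n)$ terms back to the right reproduces the stated recurrence.

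I do not expect a genuine obstacle here: the argument is a routine unfolding of the definitions of the walk, the velocity, and the acceleration, combined with linearity of expectation. The only point requiring a moment's care is the boundary case $m=0$, where one must check that $\mathbf{R}_1=\mathbf{R}_0+\mathbf{S}_0$ still holds despite $\mathbf{R}_0$ being defined separately; since $\mathbf{R}_0=\mathbf{0}$, this is immediate, and no special-casing of small $n$ is needed.
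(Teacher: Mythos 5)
Your proof is correct and follows essentially the same route as the paper's: both express $E(\mathbf{R}_{m+1})-E(\mathbf{R}_m)=E(\mathbf{S}_m)=\tau\,\mathbf{v}(m)$, take the difference for $m=n+1$ and $m=n$, and invoke the definition of $\mathbf{a}(n)$. Your extra remark about the boundary case $m=0$ is a harmless refinement that the paper leaves implicit.
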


This is an analogue of the approximate formula for acceleration:
\[\ddot{\mathbf{r}}\approx
\dfrac{\mathbf{r}(t+\tau)-2\mathbf{r}(t)+\mathbf{r}(t-\tau)}{\tau^2}
,\]
where $\mathbf{r}(t)$ is the position function; this formula is easily derived using Taylor's formula.
\begin{proof}
By the definitions of velocity and $\mathbf{R}_n$ we have:
\[\mathbf{v}(n)=
\frac{1}{\tau}
E(\mathbf{S}_n) =\frac{1}{\tau}\big[E(\mathbf{R}_{n+1})-E(\mathbf{R}_n)\big],\]
\[\mathbf{v}(n+1)=
\frac{1}{\tau}\big[E(\mathbf{R}_{n+2})-E(\mathbf{R}_{n+1})\big].\]

Subtracting these equalities, we get:
\[\mathbf{a}(n)=\frac{1}{\tau}\big[\mathbf{v}(n+1)-\mathbf{v}(n)\big]=  \frac{1}{\tau^2}\big[E(\mathbf{R}_{n+2})- 2E(\mathbf{R}_{n+1})+E(\mathbf{R}_n)\big]\text{ and}\]
\[E(\mathbf{R}_{n+2})- 2E(\mathbf{R}_{n+1})+E(\mathbf{R}_n)=\tau^2\mathbf{a}(n).\]
\end{proof}

We derive next results from the following axiom about probabilities of the particle movement.

\textbf{Axiom 2.}
\textit{Let $\boldsymbol{F}(n)= \big(F_1 (n),F_2 (n),F_3 (n)\big)$ be a resultant force acting on the particle at time $n\tau$, and let each $F_i(n)$ be a difference of two forces:
\[F_i(n)=f_n(i)-f_n(i+3),\]
where the force $f_n(i)$ acts in the direction of $\mathbf{e}_i$ and $f_n(i+3)$ - in the opposite direction.}

\textit{We assume that there is a constant $\gamma>0$ such that for any $j=1,2,3,4,5,6$ and $n\leqslant N$:}
\[p_{n+1}(j)-p_n(j)=\gamma f_n(j).\]

Axiom 2 means that during each time unit the change in the probability of moving in direction $j$ is proportional to the force acting in this direction. In short: changes in motion probabilities are proportional to the forces acting on the particle.

\begin{theorem}
The resultant force acting on the particle is proportional to its acceleration, i.e. there is a constant $\beta>0$ such that for any $n\leqslant N$:
\begin{equation}
\boldsymbol{F}(n)=\beta\mathbf{a}(n).
\label{eq:Newton_2}
\end{equation}
\end{theorem}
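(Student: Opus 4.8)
The plan is to compute the acceleration directly from its definition and show that it is a scalar multiple of the resultant force, reading off the proportionality constant at the end. The only tools needed are the explicit velocity formula from Lemma~\ref{lemma:velocity} and Axiom~2, which relates the one-step changes in the motion probabilities to the component forces.

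First I would write the acceleration as $\mathbf{a}(n)=\frac{1}{\tau}\big[\mathbf{v}(n+1)-\mathbf{v}(n)\big]$ and substitute the formula $\mathbf{v}(n)=\frac{\varepsilon}{\tau}\sum_{i=1}^3[p_n(i)-p_n(i+3)]\mathbf{e}_i$ for both the index $n$ and the index $n+1$. Grouping the coefficient of each basis vector $\mathbf{e}_i$, the difference $\mathbf{v}(n+1)-\mathbf{v}(n)$ becomes $\frac{\varepsilon}{\tau}\sum_{i=1}^3\big\{[p_{n+1}(i)-p_n(i)]-[p_{n+1}(i+3)-p_n(i+3)]\big\}\mathbf{e}_i$.

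Next I would apply Axiom~2 to each of the two bracketed differences, replacing $p_{n+1}(j)-p_n(j)$ by $\gamma f_n(j)$ for $j=i$ and $j=i+3$ (this is exactly where the hypothesis $n\leqslant N$ is used). The coefficient of $\mathbf{e}_i$ then collapses to $\gamma[f_n(i)-f_n(i+3)]=\gamma F_i(n)$, by the definition of the force components in Axiom~2. Summing over $i$ yields $\mathbf{a}(n)=\frac{\varepsilon\gamma}{\tau^2}\sum_{i=1}^3 F_i(n)\mathbf{e}_i=\frac{\varepsilon\gamma}{\tau^2}\boldsymbol{F}(n)$. Solving for $\boldsymbol{F}(n)$ gives $\boldsymbol{F}(n)=\beta\mathbf{a}(n)$ with $\beta=\frac{\tau^2}{\varepsilon\gamma}$.

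It remains only to note that $\beta>0$, which is immediate since $\tau,\varepsilon,\gamma>0$, and that $\beta$ does not depend on $n$, matching the statement exactly. I do not expect any genuine obstacle here: the argument is a short linear computation, and its only delicate points are the index bookkeeping between $i$ and $i+3$ and the fact that the same constant $\gamma$ is assumed to work uniformly for all $n\leqslant N$, which Axiom~2 supplies.
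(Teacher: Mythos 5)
Your proposal is correct and is essentially the paper's own argument run in the reverse direction: the paper starts from $\boldsymbol{F}(n)$, substitutes Axiom~2 to express the force components via $p_{n+1}(j)-p_n(j)$, and then recognizes the resulting sums as $\frac{\tau}{\varepsilon}\mathbf{v}(n+1)$ and $\frac{\tau}{\varepsilon}\mathbf{v}(n)$ using Lemma~\ref{lemma:velocity}, arriving at the same constant $\beta=\frac{\tau^2}{\varepsilon\gamma}$. The two computations are identical in content, so no further comment is needed.
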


This theorem is a weaker analogue of Newton's second law of motion:
$\mathbf{F}=m\mathbf{a}$. From \eqref{eq:Newton_2} we can define the \textbf{mass} of the particle as the coefficient of proportionality $\beta$.

\begin{proof}
For any $n\leqslant N$ and $i\in\{1,2,3\}$ by Axiom 2 we have: \[F_i(n)= f_n(i)-f_{n}(i+3)=\frac{1}{\gamma}\big[p_{n+1}(i)-p_n(i)\big]
-\frac{1}{\gamma}\big[p_{n+1}(i+3)-p_n(i+3)\big]\]
\[=\frac{1}{\gamma}\big[p_{n+1}(i)-p_{n+1}(i+3)\big]
-\frac{1}{\gamma}\big[p_n(i)-p_n(i+3)\big].\]

So 
\[\boldsymbol{F}(n)
=\sum_{i=1}^3F_i(n)\mathbf{e}_i
=\frac{1}{\gamma}\sum_{i=1}^3\big[p_{n+1}(i)-p_{n+1}(i+3)\big]
\mathbf{e}_i
-\frac{1}{\gamma}\sum_{i=1}^3\big[p_n(i)-p_n(i+3)\big]\mathbf{e}_i\]
\[=\frac{1}{\gamma}\cdot\frac{\tau}{\varepsilon}
\mathbf{v}(n+1)
-\frac{1}{\gamma}\cdot\frac{\tau}{\varepsilon}
\mathbf{v}(n)=\frac{\tau}{\varepsilon\gamma}[\mathbf{v}(n+1)-\mathbf{v}(n)]
=\frac{\tau^2}{\varepsilon\gamma}
\cdot\frac{\mathbf{v}(n+1)-\mathbf{v}(n)}{\tau}
=\frac{\tau^2}{\varepsilon\gamma}
\,\mathbf{a}(n)\]
by Lemma \ref{lemma:velocity} and the definition of acceleration $\mathbf{a}(n)$.

Thus, $\boldsymbol{F}(n)=\beta\mathbf{a}(n),$ where $\beta=\dfrac{\tau^2}{\varepsilon\gamma}>0.$
\end{proof}
 
\begin{proposition}[Motion under constant forces] Suppose in each direction the acting force is constant, that is for any $j=1,2,\ldots,6$, $f(j)=f_n(j)$ does not depend on $n$ $(n\leqslant N)$. Then there exist constant vectors $\boldsymbol{a}$ and $\boldsymbol{b}$ such that for any $n\leqslant N$:
\begin{equation}
E(\mathbf{R}_n)=\varepsilon n\left(\frac{n-1}{2}\boldsymbol{a}+\boldsymbol{b}\right).
\label{eq:parabola_3D}
\end{equation}
\label{prop:parabola}
\end{proposition}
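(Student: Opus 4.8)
The plan is to show that the whole vector $E(\mathbf{R}_n)$ is a quadratic polynomial in $n$, which will follow once I establish that the per-step expectation $E(\mathbf{S}_k)$ is \emph{affine} in $k$. The starting point is the identity $\mathbf{R}_n=\mathbf{S}_0+\mathbf{S}_1+\ldots+\mathbf{S}_{n-1}$ from Definition \ref{def:walk}, which gives $E(\mathbf{R}_n)=\sum_{k=0}^{n-1}E(\mathbf{S}_k)$. By the definition of velocity, $E(\mathbf{S}_k)=\tau\mathbf{v}(k)$, so it suffices to understand how $\mathbf{v}(k)$ depends on $k$ under the constant-force hypothesis.

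First I would invoke Axiom 2. With $f_n(j)=f(j)$ independent of $n$, the relation $p_{n+1}(j)-p_n(j)=\gamma f(j)$ has a constant increment, so each $p_n(j)$ is an arithmetic progression: $p_n(j)=p_0(j)+n\gamma f(j)$. Substituting into $p_n(i)-p_n(i+3)$ and abbreviating $F_i=f(i)-f(i+3)$ gives
\[p_n(i)-p_n(i+3)=\bigl[p_0(i)-p_0(i+3)\bigr]+n\gamma F_i,\]
an affine function of $n$. Feeding this into Lemma \ref{lemma:velocity} and using $E(\mathbf{S}_n)=\tau\mathbf{v}(n)$ yields
\[E(\mathbf{S}_n)=\varepsilon\sum_{i=1}^3\bigl[p_n(i)-p_n(i+3)\bigr]\mathbf{e}_i=\varepsilon\,(\boldsymbol{b}+n\boldsymbol{a}),\]
where $\boldsymbol{a}=\gamma\sum_{i=1}^3 F_i\mathbf{e}_i$ and $\boldsymbol{b}=\sum_{i=1}^3\bigl[p_0(i)-p_0(i+3)\bigr]\mathbf{e}_i$ are the constant vectors claimed in the statement.

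The final step is purely arithmetic: summing the affine expression gives
\[E(\mathbf{R}_n)=\sum_{k=0}^{n-1}\varepsilon\,(\boldsymbol{b}+k\boldsymbol{a})=\varepsilon\Bigl[n\boldsymbol{b}+\boldsymbol{a}\sum_{k=0}^{n-1}k\Bigr]=\varepsilon\Bigl[n\boldsymbol{b}+\tfrac{n(n-1)}{2}\boldsymbol{a}\Bigr],\]
using $\sum_{k=0}^{n-1}k=\tfrac{n(n-1)}{2}$, and factoring out $\varepsilon n$ produces exactly $\varepsilon n\bigl(\tfrac{n-1}{2}\boldsymbol{a}+\boldsymbol{b}\bigr)$.

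I do not expect a genuine obstacle here: once affinity of $E(\mathbf{S}_k)$ is established, the result is a one-line summation. The only point requiring care is bookkeeping of the index shift — the summation runs over $k=0,\dots,n-1$, so the arithmetic series contributes the factor $\tfrac{n-1}{2}$ rather than $\tfrac{n+1}{2}$, which is precisely what makes the $n-1$ appear in the final formula.
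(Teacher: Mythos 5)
Your proposal is correct and follows essentially the same route as the paper's proof: both use Axiom~2 to show $p_n(i)-p_n(i+3)=a_in+b_i$ with $a_i=\gamma[f(i)-f(i+3)]$ and $b_i=p_0(i)-p_0(i+3)$, feed this into Lemma~\ref{lemma:velocity} to get an affine expression for $E(\mathbf{S}_n)$, and sum the arithmetic progression. The only cosmetic differences are that you solve the recurrence in closed form rather than by an explicit induction, and you work with the vectors directly rather than coordinate by coordinate.
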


Proposition \ref{prop:parabola} means that when the particle moves under constant forces, its trajectory is 
approximately a parabola in 3-D. This is because \eqref {eq:parabola_3D} is a parametric equation of a parabola (with quadratic function of time $n\tau$). 

\begin{proof}
By Axiom 2 we have
\begin{equation}
p_{n+1}(j)=p_n(j)+\gamma f(j).
\label{eq:axiom_2}
\end{equation}

Denote 
\begin{itemize}
\item $a_i=\gamma[f(i)-f(i+3)]$, $\;b_i=p_0(i)-p_0(i+3)$,
\medskip
\item $\boldsymbol{a}=(a_1,a_2,a_3)$, $\;\boldsymbol{b}=(b_1,b_2,b_3)$.
\end{itemize} 

We prove the following formula by induction on $n$:
\begin{equation}
p_n(i)-p_n(i+3)=a_in+b_i.
\label{eq:parabola}
\end{equation}

If $n=0$, then the formula obviously holds.

Assume it holds for $n$. By \eqref{eq:axiom_2}, 
\[p_{n+1}(i)-p_{n+1}(i+3)=
[p_n(i)+\gamma f(i)]-
[p_n(i+3)+\gamma f(i+3)]
=p_n(i)-p_n(i+3)+
\gamma [f(i)-f(i+3)]
\]
\[=(a_in+b_i)+a_i=a_i(n+1)+b_i\text{ 
by the inductive assumption}.\]
This completes the proof of \eqref{eq:parabola}.

By Lemma \ref{lemma:velocity} and \eqref{eq:parabola}: 
\begin{equation}
E(\mathbf{S}_n)=\tau\mathbf{v}_n=\varepsilon\sum\limits_{i=1}^3\big[p_n(i) -p_n(i+3)\big]\mathbf{e}_i
=\varepsilon\sum\limits_{i=1}^3(a_in+b_i)\mathbf{e}_i.
\label{eq:parabola_extra}
\end{equation}
\[E(\mathbf{R}_n)=E(\mathbf{S}_0)+E(\mathbf{S}_1)+\ldots+E(\mathbf{S}_{n-1}).\]

By \eqref{eq:parabola_extra} the first coordinate of $E(\mathbf{S}_n)$ is $E(\mathbf{S}_n)_1=\varepsilon(a_1n+b_1)$, so the first coordinate of $E(\mathbf{R}_n)$ equals
\[E(\mathbf{R}_n)_1
=\varepsilon b_1 +\varepsilon(a_1+b_1)+\varepsilon(2a_1+b_1)+\ldots+\varepsilon[a_1(n-1)+b_1]=
\varepsilon[nb_1+ a_1(1+2+\ldots+(n-1))]
,\text{ and}\]
\begin{equation}
E(\mathbf{R}_n)_1=\varepsilon n\left[
\frac{n-1}{2}a_1+b_1\right].
\label{eq:coord_1}
\end{equation}

Similarly, for the second and third coordinates of $E(\mathbf{R}_n)$ we have:
\begin{equation}
E(\mathbf{R}_n)_2=\varepsilon n\left[
\frac{n-1}{2}a_2+b_2\right],
\label{eq:coord_2}
\end{equation}
\begin{equation}
E(\mathbf{R}_n)_3=\varepsilon n\left[
\frac{n-1}{2}a_3+b_3\right].
\label{eq:coord_3}
\end{equation}

Combining \eqref{eq:coord_1} - \eqref{eq:coord_3}, we get:
\[E(\mathbf{R}_n)=\varepsilon n\left(\frac{n-1}{2}\boldsymbol{a}+\boldsymbol{b}\right).\]
\end{proof}

\section{Conclusion}
In this paper we use a rigorous approach to construct a probability space for trajectories of a particle on three-dimensional lattice with small cells. We create a model of particle motion as a random walk on this probability space. Based on two natural assumptions about probabilistic characteristics of particle movement, we formally derive analogues of Newton's first and second laws of motion, and some related facts.

As we described in the Introduction, a similar approach can potentially be applied to resolve some inconsistencies in thermodynamics theory. We plan to continue our research in this direction by generalizing the probabilistic model from this paper to many-particle systems. We aim to use such a model to formally derive laws of thermodynamics in a consistent way. 

\section*{Ethics}
This is a mathematical article; no ethical issues can arise after its publication.

\bibliographystyle{plain}
\bibliography{ilias}

\end{document}